\newtheorem{theorem}{Theorem}
\newtheorem{corollary}[theorem]{Corollary}
\newtheorem{lemma}[theorem]{Lemma}
\newtheorem{proposition}[theorem]{Proposition}
\newtheorem{remark}[theorem]{Remark}
\def\bds{\begin{displaystyle}}
\def\eds{\end{displaystyle}}
\def\1{{\mathchoice {1\mskip-4mu\mathrm l}      % Blackboard bold 1
{1\mskip-4mu\mathrm l}
{1\mskip-4.5mu\mathrm l} {1\mskip-5mu\mathrm l}}}
\newcommand{\floor}[1]{\left\lfloor #1 \right\rfloor}
\def\rd#1{\textcolor[rgb]{1.00,0.00,0.00}{#1}}
\begin{document}

\title{Bootstrap Random Walks}
\author{Andrea Collevecchio}
\address{Andrea Collevecchio, School of Mathematical Sciences, Monash University}
\author{Kais Hamza}
\address{Kais Hamza, School of Mathematical Sciences, Monash University}
\author{Meng Shi}
\address{Meng Shi, School of Mathematical Sciences, Monash University}

\begin{abstract}
Consider a one dimensional simple random walk $X=(X_n)_{n\geq0}$. We form a new simple symmetric random walk $Y=(Y_n)_{n\geq0}$ by taking sums of products of the increments of $X$ and study the two-dimensional walk $(X,Y)=((X_n,Y_n))_{n\geq0}$. We show that it is recurrent and when suitably normalised converges to a two-dimensional Brownian motion with independent components; this independence occurs despite the functional dependence between the pre-limit processes. The process of recycling increments in this way is repeated and a multi-dimensional analog of this limit theorem together with a transience result are obtained. The construction and results are extended to include the case where the increments take values in a finite set (not necessarily $\{-1,+1\}$).
\end{abstract}

\keywords{Random walks. Functional limit theorem.\\
{\it AMS Classification:} 60G50, 60F17}

\maketitle

\markboth{{\normalsize\sc Collevecchio , Hamza \& Shi}}{{\normalsize\sc Bootstrap Random Walks}}

\section{Introduction}\label{intro}

Consider a symmetric simple random walk
$$X_n=\sum_{k=1}^n\xi_k,\ n\geq1,\mbox{ and }X_0=0,$$
where $\xi_1,\xi_2,\ldots$ are independent and identically distributed random variables with
$$\mathbb{P}(\xi_1=-1) = \mathbb{P}(\xi_1=+1) = \frac12.$$
It is easy to see that the sequence
$$\eta_n = \prod_{k=1}^n\xi_k,\ n\geq1,$$
is made up of independent and identically distributed random variables taking values $\pm1$ with equal probability.
It immediately follows that
$$Y_n=\sum_{k=1}^n\eta_k,\ n\geq1\mbox{ and }Y_0=0$$
is also a symmetric simple random walk; that is
\begin{equation}
(Y_n)_{n\geq0} \stackrel{d}{=} (X_n)_{n\geq0}.\label{eqdist}
\end{equation}
We refer to the process of constructing $(Y_n)_n$ from $(X_n)_n$ -- that is of ``recycling'' the increments of the latter to form those of the former -- as bootstrapping.

While \eqref{eqdist} is immediately clear, what may be less understood is the behaviour of the two-dimensional process $W_n=(X_n,Y_n)$.

It is worth emphasising at this point in time that the filtrations generated by the two processes $(X_n)_{n\geq0}$ and $(Y_n)_{n\geq0}$ are identical:
$$\eta_n = \prod_{k=1}^n\xi_k\mbox{ and }\xi_n = \frac{\eta_n}{\eta_{n-1}} = \eta_{n-1}\eta_n.$$
This strong (functional) dependence is however entirely lost at infinity. More precisely, we establish that the process $(W_n)_{n\geq0}$ suitably normalised converges (weakly) to a two-dimensional Brownian motion (with independent components).
The process of taking partial products and their partial sums can then be iterated yielding a higher dimensional version of this result. Again, despite the functional dependence between the components of the pre-limit processes, the limiting process is a multidimensional Brownian motion (with independent components).

In this paper, we take a further generalising step, one that drops the requirement that $\xi_n\in\{-1,+1\}$. Instead, we allow $\xi_n$ to take values in a finite set $\mathbb{U}=\{u_0,u_1,\dots,u_{p-1}\}\subset\mathbb{R}$ and propose a general method for defining $\eta_n$ and all other iterates in such way that all partial-sum processes are identical in distribution to $(X_n)_n$. Here again, the strong dependence in the joint process is lost at infinity and the limiting process is a multidimensional Brownian motion (with independent components). The functional central limit theorem in this generalised form is presented in Section \ref{clt}.

We also briefly discuss a connection with cellular automata (see Section \ref{setup}).

%\look{Further to its theoretical interest, this approach offers a practical advantage in that a multidimensional Brownian motion can be simulated using that of a one-dimensional Brownian motion, as a symmetric simple random walk, thus affording a significant saving in terms of random number generation.}

The pre-limit process $W_n$ in itself is worth looking at and we present some of its properties in Section \ref{2dim}. Section \ref{setup} deals with the model setup and presents a number of basic properties including a rather precise formulation of the iterates, of any order. A number of combinatorial proofs are given in Section \ref{proofs}.

\section{Simple two and three-dimensional bootstrap walks}\label{2dim}

Let $(\xi_n)_{n\geq 1}$ be a sequence of independent and identically distributed random variables such that $\xi_i=\pm1$ with equal probability. Define $(X_n)_n$, $(\eta_n)_n$, $(Y_n)_n$ and $(W_n)_n$ as per Section \ref{intro}. We summarise our observations so far in the following proposition.

\begin{proposition}
\begin{enumerate}
\item $(\eta_n)_{n\geq0} \stackrel{d}{=} (\xi_n)_{n\geq0}$;
\item $(Y_n)_{n\geq0} \stackrel{d}{=} (X_n)_{n\geq0}$;
\item $Y_{n+1} = Y_n + (-1)^{\frac{n-X_n}2}\xi_{n+1}$ and $X_{n+1} = X_n + (-1)^{\frac{n-X_n}2}\eta_{n+1}$;
\item $(W_n)_n$ is a time-inhomegeneous Markov process;
\item $(W_{4n})_n$ is a time-homegeneous Markov process.
\end{enumerate}
\end{proposition}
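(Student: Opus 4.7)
The plan is to prove (1) by a product-moment argument, deduce (2) immediately, establish the algebraic identity in (3) by a parity count, and then leverage (3) to obtain the Markov statements (4) and (5). The bridge between the distributional parts (1)--(2) and the structural parts (3)--(5) is the explicit formula $\eta_n=(-1)^{(n-X_n)/2}$.

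For (1), I would verify that the $\eta_n$ are i.i.d.\ symmetric $\pm1$ by computing $\mathbb{E}[\eta_{n_1}\cdots\eta_{n_k}]$ for arbitrary $n_1<\cdots<n_k$. Each factor is $\eta_{n_j}=\xi_1\cdots\xi_{n_j}$, so using $\xi_i^2=1$ the product collapses to $\prod_{i\in S}\xi_i$, where $S$ consists of those $i$ for which $|\{j:n_j\geq i\}|$ is odd. The set $S$ is non-empty because every index $i$ with $n_{k-1}<i\leq n_k$ contributes exactly once; independence and $\mathbb{E}[\xi_i]=0$ then give expectation zero. This characterises the joint distribution of i.i.d.\ symmetric Bernoulli variables, from which (2) is immediate.

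For (3), a parity count on $\xi_1,\ldots,\xi_n$ shows that the number of $-1$'s equals $(n-X_n)/2$, hence $\eta_n=(-1)^{(n-X_n)/2}$. Then $Y_{n+1}-Y_n=\eta_{n+1}=\eta_n\xi_{n+1}$ yields the first identity; $\xi_{n+1}=\eta_n\eta_{n+1}$ (because $\eta_n^2=1$) yields the second. Part (4) follows directly: the recursion $W_{n+1}=W_n+\xi_{n+1}\bigl(1,(-1)^{(n-X_n)/2}\bigr)$ expresses $W_{n+1}$ as a deterministic function of $(W_n,n)$ and an independent increment $\xi_{n+1}$, which is the defining property of a time-inhomogeneous Markov chain.

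For (5), the key observation is that when $n=4m$ one has $X_{4m}$ even and $(n-X_n)/2=2m-X_{4m}/2$, so $\eta_{4m}=(-1)^{X_{4m}/2}$ depends only on $X_{4m}$, not on $m$. Iterating (3) four times then expresses $W_{4(m+1)}$ as a function of $W_{4m}$ and the fresh noise $\xi_{4m+1},\ldots,\xi_{4m+4}$ in a form that does not involve $m$. The main delicate point is confirming that the intermediate $\eta_{4m+j}$ do not reintroduce $m$-dependence; since $\eta_{4m+j}=\eta_{4m}\xi_{4m+1}\cdots\xi_{4m+j}$, only the starting value $\eta_{4m}$ could carry $m$, and by the parity computation above it does not. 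Hence $(W_{4m})_m$ is time-homogeneous Markov.
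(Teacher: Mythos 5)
Your proof is correct, and all of parts (2)--(5) proceed exactly as one would expect (the paper states this proposition without proof, as a summary of observations). The one place where you take a genuinely different route is part (1). The paper's implicit argument -- made explicit in Section 3 for the generalised group-valued setting -- is a bijective change of variables via the backward bootstrap map $\xi_n=\eta_{n-1}\eta_n$: one writes $\mathbb{P}(\eta_1=\varepsilon_1,\ldots,\eta_n=\varepsilon_n)=\mathbb{P}(\xi_1=\varepsilon_1,\xi_2=\varepsilon_1\varepsilon_2,\ldots,\xi_n=\varepsilon_{n-1}\varepsilon_n)=2^{-n}$, which identifies the joint law directly. You instead verify that all nonempty joint moments $\mathbb{E}[\eta_{n_1}\cdots\eta_{n_k}]$ vanish and invoke the fact that, for $\{-1,+1\}$-valued vectors, the Fourier--Walsh coefficients determine the law; your observation that the surviving index set $S$ contains $(n_{k-1},n_k]$ and is therefore nonempty is the right way to close that computation. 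Both arguments are sound; the change-of-variables proof is shorter and extends verbatim to the $p$-ary setting of Section 3, whereas your moment argument is tied to the $\pm1$ case (where $\xi_i^2=1$) but has the virtue of isolating exactly why independence survives the recycling. If you wanted to polish part (1), you could state explicitly the inversion $\mathbb{P}(\eta_j=\varepsilon_j,\ j\le N)=2^{-N}\sum_{T}\prod_{j\in T}\varepsilon_j\,\mathbb{E}\bigl[\prod_{j\in T}\eta_j\bigr]$ that justifies the phrase ``this characterises the joint distribution.''
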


\begin{remark}
As the purpose of this paper is to study the joint behaviour of random walks that are identical in law and constructed entirely by recycling the increments of one of them, the assumption that $\xi_1,\xi_2,\ldots$ (or for that matter $\eta_1,\eta_2,\ldots$) are uniformly distributed (over $\{-1,+1\}$) is crucial. Indeed, suppose $\mathbb{P}(\xi_n=1)=p$. Then,
$$\mathbb{P}(\eta_n=1)=\frac12(1+(2p-1)^n)$$
which shows that $\eta_n\stackrel{d}{=}\xi_n$ if and only if $p=1/2$. In fact, writing $\overline{\varepsilon}$ for $(\varepsilon+1)/2$ whenever $\varepsilon\in\{-1,+1\}$,
the law of $\xi_n$ can be written as $\mathbb{P}(\xi_n=\varepsilon)=p^{\overline{\varepsilon}}(1-p)^{1-\overline{\varepsilon}}$ and, for any sequence $\varepsilon_1,\ldots,\varepsilon_n\in\{-1,+1\}$,
\begin{eqnarray*}
\mathbb{P}(\eta_1=\varepsilon_1,\eta_2=\varepsilon_2,\ldots,\eta_n=\varepsilon_n) & = & \mathbb{P}(\xi_1=\varepsilon_1,\xi_2=\varepsilon_1\varepsilon_2,\ldots,\xi_n=\varepsilon_{n-1}\varepsilon_n)\\
& = & p^m(1-p)^{n-m}
\end{eqnarray*}
where $m=\sum_{k=1}^n\overline{\varepsilon_{k-1}\varepsilon_k}$ ($\varepsilon_0=1$). Again, we see that $(\eta_1,\ldots,\eta_n)\stackrel{d}{=}(\xi_1,\ldots,\xi_n)$
if and only if $p=1/2$. In other words, when $p\neq1/2$ the marginal distributions are not maintained in the recycled sequence, and the independence is lost.
\end{remark}

Next, we obtain the distribution of $W_n$ and highlight that, unlike the two-dimensional simple random walk\footnote{By this we mean the process whose components are independent simple random walks.}, its support is not square in shape. In fact it has a triangle-like shape as shown in Figure \ref{SuppW12}.

\begin{figure}[h]
\begin{center}
\includegraphics[height=2in]{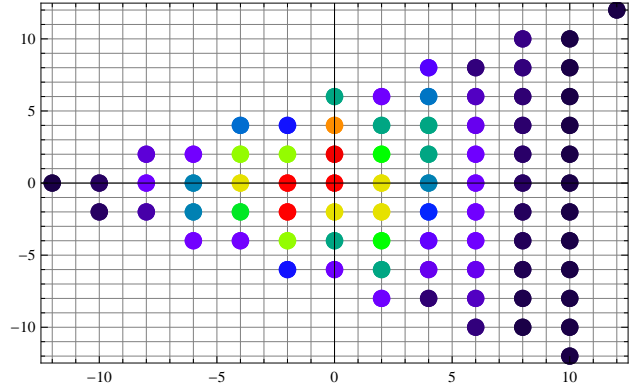}
\end{center}
\caption{Distribution of $W_{12}$ (in the black to red spectrum).}\label{SuppW12}
\end{figure}

\begin{theorem}\label{2dprob}
Assume that $n$, $k$ and $l$ are integers such that $|k|\leq n$, $|l|\leq n$ and, $n+k$ and $n+l$ are even.

If $n-k=0\mod4$ and $|2l|\leq n+k$,
$$\mathbb{P}(X_n=k,Y_n=l) = 2^{-n}\binom{\frac{n+l}{2}}{\frac{n+k+2l}{4}}\binom{\frac{n-l-2}{2}}{\frac{n+k-2l}{4}}.$$
If $n-k=2\mod4$ and $|2l+2|\leq n+k$,
$$\mathbb{P}(X_n=k,Y_n=l) = 2^{-n}\binom{\frac{n+l}{2}}{\frac{n+k+2l+2}{4}}\binom{\frac{n-l-2}{2}}{\frac{n+k-2l-2}{4}}.$$
In all other cases $\mathbb{P}(X_n=k,Y_n=l) = 0$.
\end{theorem}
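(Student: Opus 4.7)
My plan is to reason at the level of individual paths. Since $\xi_j=\eta_{j-1}\eta_j$ (with $\eta_0:=1$), the map $(\xi_1,\dots,\xi_n)\mapsto(\eta_1,\dots,\eta_n)$ is a bijection of $\{-1,+1\}^n$ onto itself, and because $\mathbb{P}$ is uniform it is enough to count the $\eta$-sequences compatible with $(X_n,Y_n)=(k,l)$ and divide by $2^n$. The condition $Y_n=l$ says directly that $(\eta_1,\dots,\eta_n)$ contains $(n+l)/2$ ones and $(n-l)/2$ minus-ones. The identity $\xi_j=-1 \Leftrightarrow \eta_j\neq\eta_{j-1}$ turns $X_n=k$ into the statement that the extended sequence $(\eta_0,\eta_1,\dots,\eta_n)$ has exactly $r:=(n-k)/2$ sign-changes, equivalently $r+1$ maximal constant runs.

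The problem thus reduces to counting binary sequences of length $n+1$ starting with $+1$, with prescribed numbers of each sign and exactly $r+1$ runs. Because the runs alternate starting from $+$, the parity of $r$ fixes the run counts $(R_+,R_-)$: when $r$ is odd, $R_+=R_-=(r+1)/2$; when $r$ is positive and even, $R_+=r/2+1$ and $R_-=r/2$. A standard stars-and-bars argument then gives the count
\[
\binom{(n+l)/2}{R_+ - 1}\binom{(n-l)/2 - 1}{R_- - 1}.
\]
Substituting $r=(n-k)/2$ and the corresponding values of $(R_+,R_-)$ produces the two advertised formulas once I apply the reflection $\binom{m}{i}=\binom{m}{m-i}$ to each factor; the same reflection turns the non-vanishing conditions of these binomials into precisely the range constraints $|2l|\leq n+k$ and $|2l+2|\leq n+k$ stated in the theorem. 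The excluded cases vanish trivially: $n+k$ or $n+l$ odd contradicts $X_n,Y_n\equiv n \pmod 2$, and $|k|,|l|>n$ is impossible for a walk of length $n$. The degenerate case $r=0$ (so $k=l=n$) deserves a short separate check; after the reflection step the convention $\binom{-1}{0}=1$ recovers the correct value $1$.

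I expect the bijection and the run-counting to be routine. The main obstacle is bookkeeping: a case split modulo $4$ combined with two separate binomial reflections is a natural place for off-by-one errors. I would therefore run a sanity check against the known marginal $\sum_l\mathbb{P}(X_n=k,Y_n=l)=\binom{n}{(n+k)/2}2^{-n}$ (for instance on the $n=12$ slice depicted in Figure \ref{SuppW12}) before committing to the final algebra.
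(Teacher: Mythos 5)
Your proposal is correct and is essentially the paper's own argument viewed from the other side of the bijection $\xi_j=\eta_{j-1}\eta_j$: the paper fixes the $\xi$-sequence, reads $X_n=k$ off the number of $(-1)$'s and encodes $Y_n=l$ as a placement of $(+1)$'s into bins delimited by those $(-1)$'s, whereas you fix the $\eta$-sequence, read $Y_n=l$ off directly and encode $X_n=k$ as a sign-change/run count --- the bins of the $\xi$-sequence are exactly the runs of the $\eta$-sequence, so both reduce to the same stars-and-bars computation and the same mod-$4$ case split. Your run counts $(R_+,R_-)$, the binomial reflections and the treatment of the degenerate boundary cases all check out against the stated formulas.
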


Of particular interest are the probabilities of return to the origin.
\begin{corollary}
\begin{enumerate}
\item $\mathbb{P}\left(W_{4n}=0\right)=2^{-4n}\binom{2n-1}{n}\binom{2n}{n}$;
\item $\mathbb{P}\left(W_{4n+2}=0\right)=2^{-(4n+2)}\binom{2n+1}{n+1}\binom{2n}{n}$;
\item $\mathbb{P}(W_{2n}=0)\sim\frac{1}{2\pi n}$.
\end{enumerate}
\end{corollary}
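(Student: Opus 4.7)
The plan is to specialise Theorem \ref{2dprob} at $(k,l)=(0,0)$ for parts (1) and (2), then extract (3) by Stirling's formula.

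First I would set the time index to $4n$ for part (1). Since $4n-0\equiv0\pmod4$ and the constraint $|2\cdot0|\leq4n$ is trivial, the first formula of Theorem \ref{2dprob} applies. The four binomial arguments simplify to $2n$, $n$, $2n-1$, $n$, and the identity $2^{-4n}\binom{2n}{n}\binom{2n-1}{n}$ drops out directly. For part (2) I would instead take time $4n+2$. Now $(4n+2)-0\equiv2\pmod4$ and $|2\cdot0+2|=2\leq4n+2$, so the second formula applies and the arguments simplify to $\binom{2n+1}{n+1}\binom{2n}{n}$ with the prefactor $2^{-(4n+2)}$.

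For part (3) I would invoke Stirling's formula $\binom{2n}{n}\sim\tfrac{4^n}{\sqrt{\pi n}}$ together with the elementary identities $\binom{2n-1}{n}=\tfrac{1}{2}\binom{2n}{n}$ (immediate from Pascal) and $\binom{2n+1}{n+1}=\tfrac{2n+1}{n+1}\binom{2n}{n}\sim2\binom{2n}{n}$. Substituting these into (1) and (2) reduces each expression to a constant multiple of $\binom{2n}{n}^2/4^{2n}\sim\tfrac{1}{\pi n}$, so the two subsequences $\mathbb{P}(W_{4n}=0)$ and $\mathbb{P}(W_{4n+2}=0)$ have matching leading-order asymptotics. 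Reassembling over both parities of the half-time then produces the claimed asymptotic for $\mathbb{P}(W_{2n}=0)$.

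There is no substantive obstacle here beyond bookkeeping; all the real work sits in Theorem \ref{2dprob}, which is already assumed. The only delicate point is being careful with the Stirling constants so that the two subsequences (time indices $\equiv0$ and $\equiv2\pmod4$) indeed produce the same leading constant, which follows from the compensation between the extra factor of $\tfrac12$ in part (1) and the extra factor of $2$ (from $\binom{2n+1}{n+1}$) combined with the extra $2^{-2}$ (from $2^{-(4n+2)}$) in part (2).
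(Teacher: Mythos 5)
Your derivation of parts (1) and (2) is exactly the intended one: the paper offers no separate proof of this corollary, treating it as the specialisation $(k,l)=(0,0)$ of Theorem \ref{2dprob} at times $4n$ and $4n+2$, and your simplification of the binomial arguments is correct in both cases. The identities $\binom{2n-1}{n}=\tfrac12\binom{2n}{n}$ and $\binom{2n+1}{n+1}\sim2\binom{2n}{n}$ are also right, and they do show that both subsequences carry the same leading constant, namely $\mathbb{P}(W_{4n}=0)\sim\mathbb{P}(W_{4n+2}=0)\sim\tfrac{1}{2\pi n}$.

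The one place you should be more careful is the final ``reassembling'' step in part (3). The constant $\tfrac{1}{2\pi n}$ you obtain is indexed by the $n$ of parts (1) and (2), i.e.\ by a quarter of the elapsed time. If you now let $2m$ denote the actual time, then $m=2n$ (or $2n+1$) and your asymptotic becomes $\mathbb{P}(W_{2m}=0)\sim\tfrac{1}{2\pi(m/2)}=\tfrac{1}{\pi m}$, which is \emph{not} literally the displayed $\tfrac{1}{2\pi m}$; the stated form only matches if the $n$ appearing in part (3) is understood to be the same $n$ as in parts (1) and (2). (The heuristic check is consistent with $\tfrac1{\pi m}$: for two independent simple walks $\mathbb{P}(W_{2m}=0)=\bigl(2^{-2m}\binom{2m}{m}\bigr)^2\sim\tfrac1{\pi m}$, and here the limit is a standard two-dimensional Brownian motion with independent components.) So your claim that the reassembly ``produces the claimed asymptotic'' glosses over a factor of $2$ in the reindexing; you should either state the result for the subsequences $W_{4n}$ and $W_{4n+2}$, or carry out the substitution explicitly and report $\tfrac1{\pi n}$ in the half-time variable. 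None of this affects the only downstream use, namely that $\sum_n\mathbb{P}(W_{2n}=0)=+\infty$ and hence $(W_n)_n$ is recurrent.
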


The recurrence of $(W_{4n})_n$ and therefore that of $(W_n)_n$ now follow immediately.

\begin{theorem}
$(W_n)_n$ is recurrent; it will revisit the origin infinitely often.
\end{theorem}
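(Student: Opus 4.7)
The plan is to combine the Markov structure established earlier in this section with the asymptotic return estimate just stated. From part (5) of the proposition we know that $(W_{4n})_{n\geq 0}$ is a time-homogeneous Markov chain on the countable state space $\mathbb{Z}^2$, started at the origin. For such a chain, the recurrence of a state $x$ is equivalent to $\sum_{n\geq 0}\mathbb{P}_x(W_{4n}=x)=\infty$; this is the standard dichotomy (the expected number of visits to $x$ is either finite, in which case the chain leaves $x$ forever with positive probability, or infinite, in which case $x$ is visited infinitely often almost surely).

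Hence the first step is to apply this criterion to the origin. Taking the even-indexed asymptotic $\mathbb{P}(W_{2n}=0)\sim \frac{1}{2\pi n}$ from the corollary along the sub-subsequence $2n\mapsto 4n$ gives $\mathbb{P}(W_{4n}=0)\sim \frac{1}{4\pi n}$, whose partial sums diverge. Therefore the origin is a recurrent state for the Markov chain $(W_{4n})_{n\geq 0}$, and consequently $\mathbb{P}(W_{4n}=0\text{ i.o.})=1$.

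To conclude the statement for the full process $(W_n)_{n\geq 0}$, observe that the event $\{W_{4n}=0\text{ i.o.}\}$ is contained in $\{W_{n}=0\text{ i.o.}\}$, so the latter also has probability one. This is the claimed recurrence.

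The only real subtlety is the appeal to the Markov-chain recurrence criterion, which requires the time-homogeneous Markov property of the sub-sampled process; this is precisely what part (5) of the proposition supplies, and without it the mere divergence $\sum_n \mathbb{P}(W_n=0)=\infty$ would give only the weaker Borel-Cantelli-style conclusion that returns occur with positive probability (and for a general dependent sequence of events, not even that). The time-inhomogeneity of $(W_n)_n$ itself is the reason one must sub-sample by $4$ rather than argue directly.
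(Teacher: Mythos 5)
Your proof is correct and follows exactly the route the paper intends: the paper states the recurrence of $(W_{4n})_n$ (hence of $(W_n)_n$) ``follows immediately'' from the return-probability corollary together with the time-homogeneous Markov property of $(W_{4n})_n$, which is precisely the Green's-function criterion you invoke. (The exact constant in $\mathbb{P}(W_{4n}=0)\sim c/n$ is immaterial, since only the divergence of the series is needed.)
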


The next natural step is to repeat the process of bootstrapping by forming successive products. We shall look at this setup in substantial generality in Section \ref{setup}. Here, we limit ourselves to the three-dimensional random walk $(X_n,Y_n,Z_n)$, also denoted $W_n$, where
$$Z_n=\sum_{k=1}^n\zeta_k,\ n\geq1,\ Z_0=0\mbox{ and }\zeta_k = \prod_{j=1}^k\eta_j = \prod_{j=1}^k\prod_{i=1}^j\xi_i,$$
and ask essentially the same questions we just answered in the two-dimensional setting.

\begin{theorem}\label{3dprob}
The following results hold for the three-dimensional bootstrap random walk $(W_n)_n$.
\begin{enumerate}
\item For any $n\geq2$,
$$\mathbb{P}\left(W_{4n}=0\right)=2^{-4n}\sum_{k=0}^{n-2} \binom{n -1}{k}\binom{n}{k+1}\binom{n}{k+1}\binom{n-1}{k+1}$$
and
$$\mathbb{P}\left(W_{4n+2}=0\right)=2^{-(4n+2)}\sum_{k=1}^n \binom{n+1}{k} \binom{n-1}{k-1}\binom{n}{k}\binom{n+1}{k+1}.$$
\item $\mathbb{P}(W_{2n}=0)=O(n^{\alpha-2})$, for any $\alpha\in(1/2,1)$.
\item $(W_n)_n$ is transient; it will visit the origin finitely often.
\end{enumerate}
\end{theorem}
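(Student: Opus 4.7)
The plan is to prove the three items in order. For (1), my approach exploits the observation that the partial products $(\zeta_k)_{k\geq 1}$ themselves form an i.i.d.\ uniform $\pm 1$ sequence, since the map $(\eta_k)\mapsto(\zeta_k)$ is a measure-preserving bijection on $\{\pm 1\}^n$. Rewriting $X_n$, $Y_n$, $Z_n$ in terms of $\zeta$ (via $\eta_k=\zeta_{k-1}\zeta_k$ and $\xi_k=\zeta_{k-2}\zeta_k$ for $k\geq 2$) and then splitting the index set into even and odd parts $a_j=\zeta_{2j}$, $b_j=\zeta_{2j+1}$ produces two \emph{independent} $\pm 1$ sequences (with $a_0=1$ forced). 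In these variables, $Z_{4n}$ is the sum of two independent simple random walks at their terminal time, $Y_{4n}$ is a weighted coupling between them, and $X_{4n}$ combines the sign-change counts of the two sub-walks. The joint event $W_{4n}=0$ then reduces, after conditioning on a single integer parameter $k$ (the number of positions $j$ with $a_j=a_{j+1}$, say), to a product of four independent binomial choices; summing over $k$ gives the stated formula. The split $4n$ vs.\ $4n+2$ mirrors the two subcases of Theorem~\ref{2dprob}.

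For (2), I would split the sum in (1) at the threshold $|k-n/2|=n^{\alpha}$. The bulk contains $O(n^{\alpha})$ terms, each bounded by $\binom{n}{\lfloor n/2\rfloor}^4=O(2^{4n}n^{-2})$, contributing $O(2^{4n}n^{\alpha-2})$. For the tail, the standard Chernoff-type estimate $\binom{n}{k}\leq 2^n\exp(-c(k-n/2)^2/n)$ gives, when $\alpha>1/2$, a fourth-power bound $2^{4n}\exp(-cn^{2\alpha-1})$ per term, so the $O(n)$ tail terms sum to an exponentially small quantity. Dividing by $2^{4n}$ yields $\mathbb{P}(W_{4n}=0)=O(n^{\alpha-2})$, and the case $W_{4n+2}$ is identical. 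For (3), parity forces $\mathbb{P}(W_{2n+1}=0)=0$ (since $X$ is a simple random walk and $X_{2n+1}$ is odd), so $\sum_n \mathbb{P}(W_n=0)=\sum_n \mathbb{P}(W_{2n}=0)\leq C\sum_n n^{\alpha-2}<\infty$ for any $\alpha<1$. The first Borel--Cantelli lemma then gives $\mathbb{P}(\{W_n=0\}\text{ infinitely often})=0$, which is the stated transience.

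The main obstacle is (1): while the even/odd decomposition into independent sub-walks clarifies the joint law substantially, extracting the precise four-binomial identity from the three simultaneous constraints $X_n=Y_n=Z_n=0$ still requires careful combinatorial book-keeping (in particular, tracking the boundary condition $\zeta_0=1$ and separating the $4n$ and $4n+2$ parity cases), which is presumably the content of the proofs deferred to Section~\ref{proofs}.
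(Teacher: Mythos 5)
Parts (2) and (3) of your proposal are correct and essentially identical to the paper's argument: the split of the sum at $k=n/2\pm n^{\alpha}$, the bound by $\binom{n}{\lfloor n/2\rfloor}^4=O(2^{4n}n^{-2})$ on each of the $O(n^{\alpha})$ bulk terms, the $2^{4n}\exp(-cn^{2\alpha-1})$ estimate on the tails (valid precisely because $\alpha>1/2$), and summability plus Borel--Cantelli for transience all appear in Section \ref{proofs} in the same form.

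Part (1), however, is not proved. Your change of variables is sound as far as it goes: $(\zeta_k)_k$ is indeed i.i.d.\ uniform, $\eta_k=\zeta_{k-1}\zeta_k$ and $\xi_k=\zeta_{k-2}\zeta_k$ (with $\zeta_0=\zeta_{-1}=1$), and the even/odd split $a_j=\zeta_{2j}$, $b_j=\zeta_{2j+1}$ gives independent sequences with $Z_{4n}=\sum_j a_j+\sum_j b_j$ and $X_{4n}$ equal to the sum of the sign-change statistics of the two sub-walks. But the remaining constraint, $Y_{4n}=a_0b_0+\sum_{j=1}^{2n-1}a_j(b_{j-1}+b_j)+a_{2n}b_{2n-1}=0$, couples the $a_j$'s to the positions where the $b$-sequence does \emph{not} change sign, and it is exactly here that the claimed factorisation into ``four independent binomial choices'' after conditioning on a single parameter $k$ would need to be justified; you assert it and then explicitly defer the ``careful combinatorial book-keeping''. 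That book-keeping \emph{is} the theorem, so the proof of (1) is incomplete. For comparison, the paper takes a different route: it uses $(Y_{0,n},Y_{1,n},Y_{2,n})\stackrel{d}{=}(Y_{-1,n},Y_{0,n},Y_{1,n})$ to replace the third walk by the backward bootstrap, reuses the bins decomposition of Theorem \ref{2dprob} to express $\{X_{4n}=Y_{4n}=0\}$ as ``$2n$ minus-ones, with the $2n$ plus-ones split equally between odd and even bins'', and then observes that $Y_{-1,4n}=0$ means exactly $2n$ sign changes, i.e.\ a prescribed number of non-empty bins; arranging the bins on a circle and summing over the number $k$ of non-empty odd bins produces the four binomial factors directly. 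If you wish to salvage your parametrisation, you must carry out the corresponding count for the coupled constraint above, which does not obviously reduce to a single summation index.
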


\begin{remark}
It immediately follows from the previous result that any multi-dimensional random walk whose three-dimensional projection is $W_n$,
is transient. The $(K+1)$-dimensional random walk introduced in Section \ref{setup} is such an example.
\end{remark}

In summary, two and three-dimensional bootstrap random walks share many of the characteristics of simple random walks. The next limit theorem
reinforces this observation. It states that these random walks appropriately normalised converge, as simple random walks do, to independent Brownian motions.

\begin{theorem}
Let $\bds\mathfrak{W}_n(t) = \frac{1}{\sqrt{n}}W_{\lfloor nt\rfloor}\eds$, $t\in[0,1]$. $\mathfrak{W}_n$ converges weakly to a three-dimensional Brownian motion (with independent components).
\end{theorem}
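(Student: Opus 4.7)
My plan is to apply the multidimensional functional martingale central limit theorem to the vector martingale $M_n=(X_n,Y_n,Z_n)$ relative to the filtration $\mathcal{F}_n=\sigma(\xi_1,\ldots,\xi_n)$. Each coordinate has $\pm 1$ increments, so after normalising by $\sqrt{n}$ the Lindeberg condition is automatic and coordinate-wise tightness follows from Donsker's theorem. The task therefore reduces to verifying that the predictable covariation matrix satisfies $n^{-1}\langle M\rangle_{\lfloor nt\rfloor}\to t\,I_3$ uniformly in $t\in[0,1]$, in probability. Once that is in hand, the Gaussian limit is forced to have covariance $t\,I_3$, i.e.\ to be a three-dimensional Brownian motion with independent components.

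First I would handle the diagonal: $\langle X,X\rangle_n=\langle Y,Y\rangle_n=\langle Z,Z\rangle_n=n$ is immediate from the $\pm 1$ increments. For the off-diagonal entries I would exploit the recursions $\eta_k=\xi_k\eta_{k-1}$ and $\zeta_k=\xi_k\eta_{k-1}\zeta_{k-1}$, which give the pathwise identities
\[
\xi_k\eta_k=\eta_{k-1},\qquad \eta_k\zeta_k=\zeta_{k-1},\qquad \xi_k\zeta_k=\eta_{k-1}\zeta_{k-1}.
\]
Summing the first two yields $\langle X,Y\rangle_n=1+Y_{n-1}$ and $\langle Y,Z\rangle_n=1+Z_{n-1}$; since $Y$ and $Z$ are simple symmetric random walks, both $\max_{k\le n}|Y_k|$ and $\max_{k\le n}|Z_k|$ are of order $\sqrt{n}$ in probability, so the corresponding normalised covariations vanish uniformly in $t$.

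The real work will lie in the third cross term, $\langle X,Z\rangle_n=\sum_{k=1}^n\eta_{k-1}\zeta_{k-1}$, which does not telescope back to one of $X,Y,Z$. The key observation will be that if $A_k=\eta_k\zeta_k$, a direct calculation collapses the double product to
\[
A_{2m}=\prod_{j=1}^m\xi_{2j-1},\qquad A_{2m+1}=\prod_{j=1}^m\xi_{2j},
\]
so $(A_{2m})_m$ and $(A_{2m+1})_m$ are exactly the sequences produced by the one-step bootstrap construction applied to the independent subsequences $(\xi_{2j-1})_j$ and $(\xi_{2j})_j$. By \eqref{eqdist}, each of these is an i.i.d.\ $\pm 1$ sequence, and the two are mutually independent, so $\sum_{k\le n}A_k$ is the sum of two independent simple symmetric random walks with running maximum of order $\sqrt{n}$; hence $n^{-1}\max_{k\le n}|\langle X,Z\rangle_k|\to 0$ in probability as well. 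Combining the three covariation estimates verifies the hypotheses of the martingale FCLT and delivers the claim. The main obstacle, as the outline shows, will be spotting the parity decomposition that turns the otherwise unwieldy $\sum\eta_{k-1}\zeta_{k-1}$ into a sum of two independent bootstrap walks.
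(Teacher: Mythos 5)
Your argument is correct, and it takes a genuinely different route from the paper. The paper proves the general $(K+1)$-dimensional statement (Theorem \ref{mength}) by the Cram\'er--Wold device: it forms the scalar martingale $S_n=\sum_\ell R_\ell$ with $R_\ell=\sum_k a_k\eta_{k,\ell}$, shows the $R_\ell$ are uncorrelated with eventually constant variance (Proposition \ref{etacorr}), and then invokes Scott's martingale invariance principle, verifying $s_n^{-2}\sum_\ell R_\ell^2\to1$ almost surely by splitting $(R_\ell)$ into $K+1$ independent subsequences via Proposition \ref{kappadep} and applying a strong law; a final time-change/asymptotic-equivalence step recovers the original normalisation. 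You instead apply the multivariate martingale FCLT directly and compute the predictable covariations pathwise: the identities $\xi_k\eta_k=\eta_{k-1}$ and $\eta_k\zeta_k=\zeta_{k-1}$ do telescope $\langle X,Y\rangle_n$ and $\langle Y,Z\rangle_n$ into $1+Y_{n-1}$ and $1+Z_{n-1}$, and your parity computation $\eta_k\zeta_k=\prod_{i\le k}\xi_i^{(k-i)\bmod 2}$ is right, so $\langle X,Z\rangle_n$ is (up to the additive constant $1$) the sum of two independent one-step bootstrap walks built from the odd- and even-indexed $\xi$'s, each a simple symmetric walk by \eqref{eqdist}. All three cross-covariations are therefore $O(\sqrt n)$ in probability, which is sharper than the $o(n)$ needed, and with bounded increments the conditional Lindeberg condition is automatic. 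What your approach buys is an explicit, quantitative identification of the covariation processes and a proof that never needs the Cram\'er--Wold reduction or the time-change; what it gives up is generality, since it leans heavily on $\xi_k^2=1$ (i.e.\ $p=2$) and on the specific three-term structure, whereas the paper's argument covers arbitrary $K$ and arbitrary finite alphabets via the $\omega_K$ and $\nu_{K,n}$ machinery.
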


The proofs of the above statements are given in Section \ref{clt}.

\section{The model setup}\label{setup}

In this section we generalise the previous setting in two directions. First, we allow the random variable $\xi_n$ to take values in any finite set
$\mathcal{U}=\{u_0,u_1,\dots,u_{p-1}\}\subset\mathbb{R}$. Then, we iterate the process of bootstrapping an arbitrary number of times.
We shall assume that $p$ is a prime number. The case when $p$ is not prime is discussed at the end of this section.

The first obstacle we have to overcome stems from the fact that in general, if $x_1,x_2\in\mathcal{U}$, $x_1x_2\not\in\mathcal{U}$. An easy way to get over this hurdle is to define a map $\mathcal{U}\times\mathcal{U}\longrightarrow\mathcal{U}$ that will replace the usual product. In other words, we define an operation $\otimes$ on $\mathcal{U}$. To extend the mapping to higher dimensions while
maintaining the flexibility afforded by the usual multiplication (associativity and commutativity), we assume that $(\mathcal{U},\otimes)$ is an Abelian group.

We shall write $e$ for the unit element of $(\mathcal{U},\otimes)$ and $u^{\otimes n}$ for the $n$th power of $u\in\mathcal{U}$. Using the Lagrange Theorem that states that the cardinality of a subgroup must divide the cardinality of the group, we immediately obtain the following results.

\begin{proposition}
Let $u\in\mathcal{U}\setminus\{e\}$.
\begin{enumerate}
\item $p$ is the smallest positive integer such that $u^{\otimes  p}=e$.
\item $u^{\otimes n}=u^{\otimes m}$ if and only if $n=m\mod p$.
\item $\mathcal{U}$ is cyclic; that is $\mathcal{U}=\left<u\right>\doteq\{e, u, u^{\otimes2},..., u^{\otimes(p-1)}\}$.
\end{enumerate}
\end{proposition}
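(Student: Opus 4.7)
The proposition is a standard consequence of Lagrange's theorem applied to a group of prime order, and the approach is hinted at in the text preceding the statement. My plan is to prove the three items in the natural order, deducing each from its predecessor.

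For part (1), the plan is to consider, for a fixed $u\in\mathcal{U}\setminus\{e\}$, the cyclic subgroup $\langle u\rangle=\{u^{\otimes n}:n\in\mathbb{Z}\}$. Since $\mathcal{U}$ is finite, the map $n\mapsto u^{\otimes n}$ cannot be injective on $\mathbb{Z}$, so there is a smallest positive integer $d$ with $u^{\otimes d}=e$, and one checks routinely that $\langle u\rangle=\{e,u,\ldots,u^{\otimes(d-1)}\}$ has exactly $d$ elements. By Lagrange's theorem, $d$ divides $|\mathcal{U}|=p$. Since $p$ is prime, $d\in\{1,p\}$, and $d=1$ is excluded by $u\neq e$. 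Hence $d=p$.

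For part (2), I would use (1) together with the Euclidean division. If $u^{\otimes n}=u^{\otimes m}$, then $u^{\otimes(n-m)}=e$, and writing $n-m=qp+r$ with $0\le r<p$ gives $u^{\otimes r}=e$; by the minimality asserted in (1), this forces $r=0$, i.e. $n\equiv m\pmod p$. The converse direction is immediate from $u^{\otimes p}=e$.

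For part (3), combining (1) and (2), the $p$ elements $e,u,u^{\otimes 2},\ldots,u^{\otimes(p-1)}$ are pairwise distinct (their exponents being pairwise non-congruent mod $p$). Hence $\langle u\rangle$ is a subgroup of $\mathcal{U}$ with $|\langle u\rangle|=p=|\mathcal{U}|$, which forces $\langle u\rangle=\mathcal{U}$.

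There is no real obstacle here: everything is a textbook application of Lagrange's theorem to a group of prime order. The only mildly delicate point is ensuring that the argument for part (1) does not tacitly use what part (3) asserts — this is why I would phrase the proof of (1) purely in terms of $\langle u\rangle$ as an abstract cyclic subgroup, invoke Lagrange, and only afterwards upgrade to the full group $\mathcal{U}$ in part (3).
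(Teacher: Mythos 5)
Your proof is correct and follows exactly the route the paper intends: the paper offers no written proof beyond the remark that the results follow ``immediately'' from Lagrange's theorem, and your argument is precisely the standard filling-in of that one-line justification (order of $\langle u\rangle$ divides $p$, is not $1$, hence equals $p$, with (2) and (3) as routine consequences).
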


Next we introduce the forward bootstrap operator on the set $\mathfrak{U}$ of sequences in $\mathcal{U}$
$$\begin{array}{lccc}
\Delta: & \mathfrak{U} & \longrightarrow & \mathfrak{U}\\
& (x_n)_n & \longrightarrow & \big(\bigotimes_{\ell=1}^nx_\ell\big)_n
\end{array}$$
and its inverse, the backward bootstrap operator
$$\begin{array}{lccc}
\Delta^{-1}: & \mathfrak{U} & \longrightarrow & \mathfrak{U}\\
& (y_n)_n & \longrightarrow & \big(y_{n-1}^{\otimes(p-1)}\otimes y_n\big)_n
\end{array}$$

These mappings can be iterated to define the operator $\Delta^K$, for any positive integer $K$. It is then easy to see that if $x_{\bullet}=(x_n)_n\in\mathfrak{U}$ and $y_{K,\bullet}=\Delta^K(x_\bullet)$,
$$y_{K,n} = \bigotimes_{\ell=1}^nx_{n-\ell+1}^{\otimes\nu_{K,\ell}},$$
for some array $\nu_{K,\ell}\in\{0,1,\ldots,p-1\}$.

One can more generally define the mapping $\Delta^K$ for any integer $K$. Furthermore, for any integers $K$ and $J$, we have
$$y_{K,\bullet}=\Delta^{K-J}(y_{J,\bullet}).$$
In particular,
\begin{equation}
y_{K,\bullet}=\Delta^K(x_\bullet)\mbox{ and }\Delta^{-K}(y_{K,\bullet})=x_\bullet.\label{Delta}
\end{equation}

\begin{figure}[h]
\begin{center}
\includegraphics[height=2.5in]{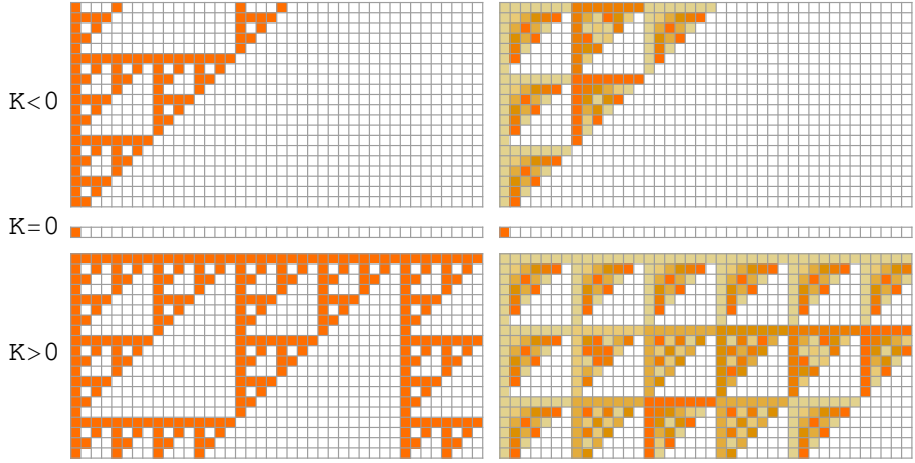}
\end{center}
\caption{The array $\nu_{K,\ell}$ when $p=2$ (left) and $p=7$ (right)}\label{2diterates}
\end{figure}

\begin{proposition}
The array $\nu_{K,n}$, $K\in\mathbb{Z}$, $n\in\mathbb{N}^*$, satisfies the following (defining) properties:
\begin{enumerate}
\item $\nu_{0,1}=1$ and $\nu_{0,n}=0$ for $n\geq 2$;
\item $\nu_{K,1}=1$ for any $K$;
\item $\nu_{K+1,n+1}=\nu_{K+1,n}+\nu_{K,n+1} \mod p$, for any $K$ and any $n\geq1$.
\end{enumerate}
It follows that, for any $K$ and any $n\geq1$,
$$\nu_{K,n}= {n+K-2 \choose n-1} = \frac{(K+n-2)(K+n-3)\ldots K}{(n-1)!}\mod p.$$
\end{proposition}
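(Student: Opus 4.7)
The plan is to first derive the three recursive properties directly from the definition of $\Delta$, then argue that these properties determine the array $\nu_{K,n}$ uniquely, and finally verify that the claimed binomial coefficient formula satisfies the same three properties. Equality modulo $p$ then follows from uniqueness.

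To establish the defining properties, I would start from the fact that $y_{K+1,\bullet}=\Delta(y_{K,\bullet})$, which unpacks to the pointwise identity $y_{K+1,n+1}=y_{K+1,n}\otimes y_{K,n+1}$. Comparing the coefficient of each $x_j$ on both sides of
$$\bigotimes_{\ell=1}^{n+1}x_{n+2-\ell}^{\otimes\nu_{K+1,\ell}}=\Big(\bigotimes_{\ell=1}^{n}x_{n+1-\ell}^{\otimes\nu_{K+1,\ell}}\Big)\otimes\Big(\bigotimes_{\ell=1}^{n+1}x_{n+2-\ell}^{\otimes\nu_{K,\ell}}\Big),$$
using the substitution $m=n+2-j$, gives property (3). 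Since $\mathcal{U}$ has order $p$, exponents live in $\mathbb{Z}/p\mathbb{Z}$, which is why the recursion is modulo $p$. Property (1) is immediate from $\Delta^0=\mathrm{id}$, so $y_{0,n}=x_n$ forces $\nu_{0,1}=1$ and $\nu_{0,\ell}=0$ for $\ell\geq 2$. Property (2) for $K\geq 0$ then follows inductively, since the $\ell=1$ term (the coefficient of $x_n$ in $y_{K,n}$) satisfies $\nu_{K+1,1}=\nu_{K,1}$ by the recursion at $n=0$. For negative $K$, one uses $\Delta^{-1}$ in the form $y_{n-1}^{\otimes(p-1)}\otimes y_n=y_{n-1}^{-1}\otimes y_n$ (since $u^{\otimes p}=e$) to extend both properties.

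Next I would verify that properties (1)--(3) uniquely determine $\nu_{K,n}$ for all $K\in\mathbb{Z}$ and $n\geq 1$. Property (1) fixes row $K=0$, property (2) fixes column $n=1$, and (3) rewritten as $\nu_{K+1,n+1}=\nu_{K+1,n}+\nu_{K,n+1}\bmod p$ propagates values upward in $K$ (and as $\nu_{K,n+1}=\nu_{K+1,n+1}-\nu_{K+1,n}\bmod p$, downward in $K$), so the entire array is determined by induction on $n$.

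Finally, set $f(K,n)=\binom{n+K-2}{n-1}$ interpreted as the polynomial $(K+n-2)(K+n-3)\cdots K/(n-1)!$, which is well defined for every integer $K$. Then $f(0,1)=1$ (empty product), $f(0,n)=0$ for $n\geq 2$ (since the factor $K$ appears), and $f(K,1)=1$ for every $K$, so (1) and (2) hold. For (3), Pascal's rule in its polynomial form gives
$$f(K+1,n)+f(K,n+1)=\binom{n+K-1}{n-1}+\binom{n+K-1}{n}=\binom{n+K}{n}=f(K+1,n+1),$$
which is a polynomial identity in $K$ and therefore holds modulo $p$. By the uniqueness step, $\nu_{K,n}\equiv f(K,n)\bmod p$, as claimed.

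The main obstacle, beyond careful bookkeeping in the index shuffle that produces property (3), is handling negative $K$ cleanly: one must confirm that the generalized binomial $\binom{n+K-2}{n-1}$, interpreted as a falling-factorial polynomial in $K$, satisfies Pascal's rule as a polynomial identity (rather than only combinatorially), and that the backward operator $\Delta^{-1}$ defined via $u^{\otimes(p-1)}$ really inverts $\Delta$ so that the recursion remains valid for all $K\in\mathbb{Z}$.
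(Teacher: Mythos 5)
Your proof is correct and follows essentially the same route as the paper's: both derive the Pascal-type recursion (3) from composing bootstrap operators (the paper uses $\Delta^{K+1}=\Delta^{K}\circ\Delta$ and re-indexes a double product to obtain $\nu_{K+1,\ell}=\sum_{j\leq\ell}\nu_{K,j}$, while you use $\Delta^{K+1}=\Delta\circ\Delta^{K}$ and compare exponents of each $x_j$ directly, which is legitimate since $\mathcal{U}$ is cyclic of prime order). You additionally spell out the uniqueness of the array determined by (1)--(3), the polynomial form of Pascal's rule, and the extension to negative $K$ via $\Delta^{-1}$, all of which the paper leaves implicit under ``It follows that.''
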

\begin{proof}
Let $x_\bullet=(x_n)_n\in\mathfrak{U}$, $y_\bullet=\Delta(x_\bullet)$ and $y_{K,\bullet}=\Delta^K(x_\bullet)$.
We deduce (1) and (2) from the facts that $y_{0,\bullet}=x_\bullet$ and $x_1=y_{K,1}=x_1^{\otimes\nu_{K,1}}$, respectively.
(3) follows from:
\begin{eqnarray*}
\bigotimes_{\ell=1}^nx_{n-\ell+1}^{\otimes\nu_{K+1,\ell}} & = & y_{K+1,n}\ =\ \Delta^K(y_\bullet)_n\ = \ \bigotimes_{\ell=1}^ny_{n-\ell+1}^{\otimes\nu_{K,\ell}}\ =\ \bigotimes_{\ell=1}^n\left(\bigotimes_{k=1}^{n-\ell+1}x_k\right)^{\otimes\nu_{K,\ell}}\\
& = & \bigotimes_{\ell=1}^n\bigotimes_{k=1}^{n-\ell+1}x_k^{\otimes\nu_{K,\ell}}\ =\ \bigotimes_{k=1}^n\bigotimes_{\ell=1}^{n-k+1}x_k^{\otimes\nu_{K,\ell}}\ =\ \bigotimes_{k=1}^nx_k^{\otimes(\nu_{K,1}+\ldots+\nu_{K,n-k+1})}\\
& = & \bigotimes_{\ell=1}^nx_{n-\ell+1}^{\otimes(\nu_{K,1}+\ldots+\nu_{K,\ell})}
\end{eqnarray*}
\end{proof}

As in the simple random walk setting, we assume that the random variables $\xi_1,\xi_2,\ldots$ are independent and have a common uniform distribution on $\mathcal{U}$:
$$\mathbb{P}(\xi_n=u_i)=\frac{1}{p},\quad 0\leq i\leq p-1.$$

Further, we define recursively $\eta_{K,n}$ as
$$\eta_{1,n}=\bigotimes_{\ell=1}^n \xi_\ell, \ \ \ \eta_{K,n}=\bigotimes_{\ell=1}^n \eta_{K-1,\ell},$$
and write for simplicity $\eta_n$ for $\eta_{1,n}$.

Define $(\mathcal{F}_n)_{n\geq 1}$ to be the natural filtration generated by the sequence $(\xi_n)_{n\geq1}$. From \eqref{Delta} we get that, for any $K$, $$\mathcal{F}_n=\sigma(\xi_1,\cdots,\xi_n)=\sigma(\eta_{K,1},\cdots,\eta_{K,n}).$$

\begin{proposition}
For any given $K$, $(\eta_{K,n})_n$ and $(\xi_n)_n$ have the same distribution. In particular, $\eta_{K,n}$ is uniform over $\mathcal{U}$ and is independent of $\mathcal{F}_{n-1}$.
\end{proposition}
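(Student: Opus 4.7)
The strategy is to argue by induction on $K\geq 1$ that $(\eta_{K,n})_{n\geq 1}$ is a sequence of independent, $\mathcal{U}$-uniform random variables satisfying $\sigma(\eta_{K,1},\ldots,\eta_{K,n})=\mathcal{F}_n$. The filtration equality has already been recorded as a consequence of \eqref{Delta}, so only the i.i.d.-uniform claim requires work. The single group-theoretic fact that drives the argument is that in a finite abelian group $(\mathcal{U},\otimes)$, left-translation $u\mapsto u\otimes a$ by any fixed $a\in\mathcal{U}$ is a bijection and therefore pushes the uniform distribution forward to itself.

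For the base case $K=1$, I would run a secondary induction on $n$. Writing $\eta_n=\eta_{n-1}\otimes\xi_n$, the factor $\eta_{n-1}$ is $\mathcal{F}_{n-1}$-measurable while $\xi_n$ is uniform on $\mathcal{U}$ and independent of $\mathcal{F}_{n-1}$. Conditioning on $\mathcal{F}_{n-1}$ and applying the translation observation with $a=\eta_{n-1}$ shows that, given $\mathcal{F}_{n-1}$, $\eta_n$ is uniform on $\mathcal{U}$; since this conditional law does not depend on the conditioning variable, $\eta_n$ is unconditionally uniform and independent of $\mathcal{F}_{n-1}$, hence independent of $(\eta_1,\ldots,\eta_{n-1})$.

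The inductive step for general $K$ is the same argument with $\xi$ replaced by $\eta_{K-1}$: by the induction hypothesis, $(\eta_{K-1,n})_n$ is i.i.d. uniform on $\mathcal{U}$ and generates $(\mathcal{F}_n)_n$, so applying the above reasoning to the decomposition $\eta_{K,n}=\eta_{K,n-1}\otimes\eta_{K-1,n}$ yields uniformity and independence from $\mathcal{F}_{n-1}$ at stage $K$, which is all that is required. Consequently $(\eta_{K,n})_n\stackrel{d}{=}(\xi_n)_n$ for every $K$.

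I do not anticipate a substantive obstacle. The only point requiring genuine care is isolating, inside each $\eta_{K,n}$, a single ``fresh'' factor that is both uniform and independent of the $\sigma$-algebra generated by all the other factors; the recursion $\eta_{K,n}=\eta_{K,n-1}\otimes\eta_{K-1,n}$, together with the filtration identity $\sigma(\eta_{K-1,1},\ldots,\eta_{K-1,n-1})=\mathcal{F}_{n-1}$, provides exactly this decomposition. An alternative route avoiding the outer induction on $K$ would be to invoke the explicit formula $\eta_{K,n}=\bigotimes_{\ell=1}^n\xi_{n-\ell+1}^{\otimes\nu_{K,\ell}}$ together with $\nu_{K,1}=1$ to peel off $\xi_n$ directly, but the recursive presentation feels cleaner.
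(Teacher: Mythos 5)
Your proof is correct, but it handles the key step differently from the paper. Both arguments reduce matters to relating $\eta_{K,\bullet}$ to $\eta_{K-1,\bullet}$, and both ultimately rest on the same group-theoretic fact that translation by a fixed element of $(\mathcal{U},\otimes)$ preserves the uniform law. The paper, however, disposes of the $K=1$ case in one line by applying the backward bootstrap operator: the event $\{\eta_1=y_1,\ldots,\eta_n=y_n\}$ is rewritten as $\{\xi_1=y_1,\;\xi_2=y_1^{\otimes(p-1)}\otimes y_2,\ldots,\xi_n=y_{n-1}^{\otimes(p-1)}\otimes y_n\}$, whose probability is $(1/p)^n$ since the $\xi_k$ are i.i.d.\ uniform; this computes all finite-dimensional distributions at once. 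You instead run a double induction (on $K$, then on $n$) and argue conditionally: given $\mathcal{F}_{n-1}$, the decomposition $\eta_{K,n}=\eta_{K,n-1}\otimes\eta_{K-1,n}$ exhibits $\eta_{K,n}$ as a measurable translate of a fresh uniform factor, so its conditional law is uniform and independent of the past. Your route is slightly longer but makes the ``in particular'' clause of the statement (uniformity and independence of $\mathcal{F}_{n-1}$) the primitive object rather than a corollary of the joint law, and it isolates explicitly the one probabilistic mechanism at work; the paper's route is shorter and delivers the full equality of finite-dimensional distributions in a single computation. Both are complete; the only point worth making explicit in your write-up is that the inner induction's base case $\eta_{K,1}=\eta_{K-1,1}=\cdots=\xi_1$ is where $\nu_{K,1}=1$ enters.
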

\begin{proof} It is, of course, sufficient to prove the result for $K=1$, which we establish with the aid of the backward bootstrap operator:
\begin{eqnarray*}
\lefteqn{\mathbb{P}\left(\eta_1=y_1,\eta_2=y_2,\dots,\eta_n=y_n\right)}\\
& = & \mathbb{P}\left(\xi_1=y_1,\xi_2=y_1^{\otimes(p-1)}\otimes y_2,\dots,\xi_n=y_{n-1}^{\otimes(p-1)}\otimes y_n\right)
\ =\ (1/p)^n.
\end{eqnarray*}
\end{proof}

\begin{remark}
Note that not only do we have $(\eta_n)_n\stackrel{d}{=}(\xi_n)_n$ but for any sequence of integers $m_n\neq0\mod p$, $(\xi_n^{\otimes m_n})_n\stackrel{d}{=}(\xi_n)_n$ and therefore $\left(\bigotimes_{k=1}^n\xi_k^{\otimes m_k}\right)_{n\geq0} \stackrel{d}{=} (\xi_n)_{n\geq0}$. Indeed, let $y_1,\ldots,y_n\in\mathcal{U}$. Fix $u\in\mathcal{U}$, $u\neq e$. Then we can write $y_1,\ldots,y_n$ as $u^{\otimes j_1},\ldots,u^{\otimes j_n}$ and using Lemma \ref{bezout}, we have
\begin{eqnarray*}
\lefteqn{\mathbb{P}\left(\xi_1^{\otimes m_1}=u^{\otimes j_1},\ldots,\xi_n^{\otimes m_n}=u^{\otimes j_n}\right)}\\
& = & \mathbb{P}\left(\xi_1^{\otimes m_1}=u^{\otimes(j_1+l_1p)},\ldots,\xi_n^{\otimes m_n}=u^{\otimes(j_n+l_np)}\right)\\
& = & \mathbb{P}\left(\xi_1^{\otimes m_1}=u^{\otimes(k_1m_1)},\ldots,\xi_n^{\otimes m_n}=u^{\otimes(k_nm_n)}\right)\\
& = & \mathbb{P}\left(\xi_1=u^{\otimes k_1},\ldots,\xi_n=u^{\otimes k_n}\right)\ =\ (1/p)^n.
\end{eqnarray*}
\end{remark}

\begin{lemma}\label{bezout}
For any integer $m\neq0 \mod p$ and $0\leq j<p$, there exists a pair of
integers $(k,\ell)$ such that $km=j+\ell p$.
\end{lemma}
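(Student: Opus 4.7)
The plan is to recognise this as a very small consequence of the primality of $p$. Since $0 < |m \bmod p| < p$ and $p$ is prime, we have $\gcd(m,p)=1$, so the classical Bézout identity supplies integers $a,b$ with $am+bp=1$. Multiplying this relation by $j$ yields $(ja)m=j+(-jb)p$, and it suffices to set $k=ja$ and $\ell=-jb$.

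Equivalently, one can phrase the argument as a bijectivity statement: multiplication by $m$ is a group automorphism of $\mathbb{Z}/p\mathbb{Z}$ (again because $\gcd(m,p)=1$), so the map $k\mapsto km\bmod p$ is surjective and in particular hits the residue $j$; lifting this congruence to $\mathbb{Z}$ produces the integer $\ell$.

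I do not expect any genuine obstacle: the result is essentially a restatement of Bézout / of the fact that $\mathbb{Z}/p\mathbb{Z}$ is a field. The only thing to keep an eye on is that the statement is false if $p$ is not prime (with $m$ a nontrivial divisor of $p$), which is consistent with the standing assumption in this section that $p$ is prime.
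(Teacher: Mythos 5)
Your proof is correct and is essentially identical to the paper's: both invoke $\gcd(m,p)=1$ from the primality of $p$, apply B\'ezout's identity to get $am+bp=1$, and set $k=ja$, $\ell=-jb$. The alternative phrasing via invertibility in $\mathbb{Z}/p\mathbb{Z}$ is a harmless restatement of the same fact.
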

\begin{proof}
Since $p$ is prime, $\gcd(m,p)=1$. By B\'ezout's identity, there exist integers $a$
and $b$ such that $am+pb=1$. Then with $k=ja$ and $\ell=-jb$, we have $km=j+\ell p$.
\end{proof}

The next proposition looks at the dependence structure of the columns in the $\eta_{K,n}$ array. It relies on the following lemma.

\begin{lemma}
Fix $x_1,\ldots,x_{n-1}$ and $x_{n+K},y_1,\ldots,y_K$ all in $\mathcal{U}$.
The system of equations in $x_n,\ldots,x_{n+K-1}$
\begin{equation}\label{system}
\bigotimes_{\ell=1}^{n+K}x_{n+K-\ell+1}^{\otimes\nu_{k,\ell}} = y_k,\ k=1,\ldots,K
\end{equation}
has a unique solution.
\end{lemma}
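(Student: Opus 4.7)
The plan is to proceed by induction on $K$. For the base case $K=1$ the system reduces to the single equation $\bigotimes_{\ell=1}^{n+1}x_\ell = y_1$, which, since $(\mathcal{U},\otimes)$ is a group and all factors other than $x_n$ are prescribed, admits $x_n$ as its unique solution.

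For the inductive step, assume the lemma for $K-1$. The definition $\eta_{k,m}=\bigotimes_{\ell=1}^{m}\eta_{k-1,\ell}$ immediately gives the recursion $\eta_{k,m}=\eta_{k-1,m}\otimes\eta_{k,m-1}$. Specialising to $m=n+K$ and solving (using that $u^{\otimes(p-1)}$ is the inverse of $u$) yields
$$\eta_{k,n+K-1}=\eta_{k-1,n+K}^{\otimes(p-1)}\otimes y_k,\quad k=1,\dots,K,$$
where $\eta_{0,n+K}=x_{n+K}$ is part of the fixed data. Thus all the values $\eta_{k,n+K-1}$, $k=1,\dots,K$, become explicit functions of the knowns.

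Next I would recast the remaining content of the system as an instance of the lemma for the sequence $a_\ell=\eta_{1,\ell}$. Writing $\tilde\eta_{k,\cdot}$ for the iterated bootstrap of $(a_\ell)_\ell$, a short induction on $k$ gives $\tilde\eta_{k,m}=\eta_{k+1,m}$. Hence the subsystem
$$\tilde\eta_{k,n+(K-1)}=\eta_{k+1,n+K-1},\quad k=1,\dots,K-1,$$
is exactly the $(K-1)$-version of the lemma applied to $(a_\ell)_\ell$: the prescribed data are $a_1,\dots,a_{n-1}$ (computable from $x_1,\dots,x_{n-1}$) and $a_{n+K-1}=\eta_{1,n+K-1}$ (computed in the previous step), while the right-hand sides are the values $\eta_{k+1,n+K-1}$ just obtained. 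By the induction hypothesis, there is a unique choice of $a_n,\dots,a_{n+K-2}$.

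With $a_\ell=\eta_{1,\ell}$ now uniquely determined for every $\ell=n-1,n,\dots,n+K-1$, the original unknowns are recovered via $x_\ell=a_{\ell-1}^{\otimes(p-1)}\otimes a_\ell$ for $\ell=n,\dots,n+K-1$, which yields both existence and uniqueness. The main obstacle is essentially bookkeeping: making sure the bootstrap levels and index ranges line up so that the induction hypothesis genuinely applies to the shifted problem. Once that verification is in place, the rest of the argument is routine.
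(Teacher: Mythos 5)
Your proof is correct and rests on the same mechanism as the paper's: the local recursion $\eta_{k,m}=\eta_{k-1,m}\otimes\eta_{k,m-1}$ together with the invertibility of $\otimes$ means that any two of the three cells in an L-shaped corner of the bootstrap array determine the third. The paper carries this out as an informal sweep through a coloured diagram, whereas you package it as an induction on $K$ (peeling off one row at a time via the self-similarity $\Delta^{k}\circ\Delta=\Delta^{k+1}$); this is a cleaner formalisation of the same idea, and the existence half that you leave implicit does follow because each of your reduction steps is an equivalence, not just an implication.
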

\begin{proof}
Write $y_{k,n} = \bigotimes_{\ell=1}^nx_{n-\ell+1}^{\otimes\nu_{k,\ell}}$ so that \eqref{system} is equivalent to
$$y_{k,n+K}=y_k,\ k=1,\ldots,K.$$
With the aid of the following representation we show that a solution exists and is unique. While we do not give an explicit expression for this solution, the mechanism to obtain it is clear. Starting from the light blue (first row) and light red (last column) cells we construct the remainder of the array. The colours are merely an indication of the steps in the construction and do not represent particular values.
\begin{figure}[h]
\begin{center}
\includegraphics[height=1.5in]{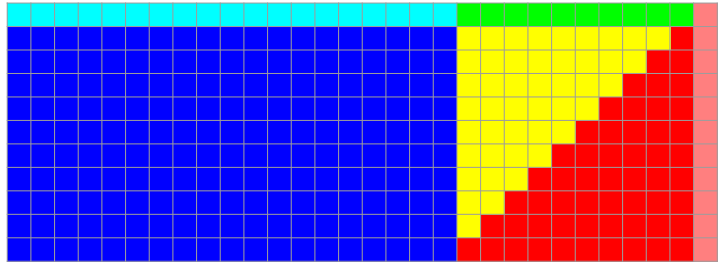}
\end{center}
\caption{The array $y_{k,\ell}$ for $0\leq k\leq K$ and $1\leq\ell\leq n+K$.}
\end{figure}

Using the fact that any two cells in a triangular array of the type
\begin{center}\begin{tabular}{|c|c|}\cline{2-2}\multicolumn{1}{c|}{}&x\\\hline y&z\\\hline\end{tabular}\end{center}
uniquely determine the third, and the fact that the first column is identical to its top cell ($y_{k,1}=y_{0,1}=x_1$), we see that we can work our way in a unique fashion from the the vector $(y_{0,n+K},\ldots,y_{K,n+K})$ (the light red cells on the right-most column) to $(y_{K,n},\ldots,y_{0,n+K})$ and other intermediate values (the cells forming the red triangular array), and from $(x_1,\ldots,x_{n-1})$ (the light blue cells on the first row) to $(y_{0,n-1},\ldots,y_{K,n-1})$ and other intermediate values (the rectangular array made up of blue cells and in particular the right-most column within it). Combining the red and blue cells, we can then work our way up through the yellow cells and arrive to a unique set of values for $(x_n,\ldots,x_{n+K-1})$ (the green cells on the top row).
\end{proof}

\begin{remark}
A by-product of the the above lemma is an interesting observation on the square matrix $(\nu_{k,\ell+1})_{1\leq k,\ell\leq K}$. Indeed, \eqref{system} can be rewritten
$$x_n^{\otimes\nu_{k,K+1}}\otimes x_{n+1}^{\otimes\nu_{k,K}}\otimes\ldots\otimes x_{n+K-1}^{\otimes\nu_{k,2}} = x_{n+K}^{\otimes(-1)}\otimes y_k\otimes\bigotimes_{\ell=K+2}^{n+K}x_{n+K-\ell+1}^{\otimes(-\nu_{k,\ell})},\
k=1,\ldots,K,$$
which has a unique solution if and only if the matrix $(\nu_{k,\ell+1})_{1\leq k,\ell\leq K}$ is non-singular. As a result we get the that, for any $K$, universally in $p$,
$$\left|
\begin{array}{cccc}
\nu_{1,2} & \nu_{1,3} & \ldots & \nu_{1,K+1}\\
\nu_{2,2} & \nu_{2,3} & \ldots & \nu_{2,K+1}\\
\vdots & \vdots & \ddots & \vdots\\
\nu_{K,2} & \nu_{K,3} & \ldots & \nu_{K,K+1}
\end{array}
\right|\neq0.$$
Choosing $p$ large enough shows the result to be true for $\nu_{k,\ell}$ replaced with the binomial coefficient ${\ell+k-2\choose\ell-1}$.
\end{remark}

\begin{proposition}\label{kappadep}
The vector $(\eta_{0,n+K},\ldots,\eta_{K,n+K})$ is uniform over $\mathcal{U}^{K+1}$ and is independent of $\mathcal{F}_{n-1}$. In particular, the random variables $\eta_{0,n+K},\ldots,\eta_{K,n+K}$ are independent.
\end{proposition}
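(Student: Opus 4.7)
The plan is to reduce the proposition to the preceding lemma, which inverts the map $(\xi_n,\ldots,\xi_{n+K-1})\mapsto(\eta_{1,n+K},\ldots,\eta_{K,n+K})$ once $\xi_1,\ldots,\xi_{n-1}$ and $\xi_{n+K}$ are held fixed. The key observation that makes the lemma directly applicable is that $\eta_{0,n+K}$ is just $\xi_{n+K}$: since $\nu_{0,1}=1$ and $\nu_{0,\ell}=0$ for $\ell\geq2$, the representation
$$\eta_{k,m}=\bigotimes_{\ell=1}^{m}\xi_{m-\ell+1}^{\otimes\nu_{k,\ell}}$$
specialises at $k=0$ to $\eta_{0,m}=\xi_m$. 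Hence in the statement of the lemma, the role of $x_{n+K}$ is played by $\eta_{0,n+K}$ itself.

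Concretely, I would fix an arbitrary $(x_1,\ldots,x_{n-1})\in\mathcal{U}^{n-1}$ and an arbitrary target $(y_0,y_1,\ldots,y_K)\in\mathcal{U}^{K+1}$, and set $x_{n+K}:=y_0$. The preceding lemma then yields unique values $x_n^*,\ldots,x_{n+K-1}^*\in\mathcal{U}$ solving
$$\bigotimes_{\ell=1}^{n+K}x_{n+K-\ell+1}^{\otimes\nu_{k,\ell}}=y_k,\quad k=1,\ldots,K.$$
Translating back through the representation above, the joint event
$$\{\xi_1=x_1,\ldots,\xi_{n-1}=x_{n-1},\ \eta_{0,n+K}=y_0,\ \eta_{1,n+K}=y_1,\ldots,\eta_{K,n+K}=y_K\}$$
coincides \emph{exactly} with
$$\{\xi_1=x_1,\ldots,\xi_{n-1}=x_{n-1},\ \xi_n=x_n^*,\ldots,\xi_{n+K-1}=x_{n+K-1}^*,\ \xi_{n+K}=y_0\},$$
whose probability is $(1/p)^{n+K}$ by the i.i.d.\ uniform law of the $\xi_i$'s.

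Dividing by $\mathbb{P}(\xi_1=x_1,\ldots,\xi_{n-1}=x_{n-1})=(1/p)^{n-1}$ gives
$$\mathbb{P}\bigl(\eta_{0,n+K}=y_0,\ldots,\eta_{K,n+K}=y_K\,\big|\,\mathcal{F}_{n-1}\bigr)=\left(\frac{1}{p}\right)^{K+1}$$
almost surely, independently of $(x_1,\ldots,x_{n-1})$. This single identity simultaneously delivers all three claims: uniformity of the vector $(\eta_{0,n+K},\ldots,\eta_{K,n+K})$ over $\mathcal{U}^{K+1}$, its independence of $\mathcal{F}_{n-1}$, and (by factoring $(1/p)^{K+1}$ as a product of $K+1$ factors of $1/p$) the mutual independence of its components. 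The substantive content has already been absorbed by the preceding lemma, so the only care point here is to match its data correctly---in particular to recognise $\eta_{0,n+K}$ as $\xi_{n+K}$ and to verify that the change of variables $(\xi_n,\ldots,\xi_{n+K})\leftrightarrow(\xi_{n+K},\eta_{1,n+K},\ldots,\eta_{K,n+K})$ (for frozen $\xi_1,\ldots,\xi_{n-1}$) is a bijection on $\mathcal{U}^{K+1}$, which is precisely what the lemma provides.
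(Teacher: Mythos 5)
Your proof is correct and follows essentially the same route as the paper: both fix $\xi_1,\ldots,\xi_{n-1}$ and the target vector, identify $\eta_{0,n+K}$ with $\xi_{n+K}$, invoke the preceding lemma to obtain the unique preimage $(\xi_n,\ldots,\xi_{n+K-1})$, and compute the conditional probability as $(1/p)^{n+K}/(1/p)^{n-1}=(1/p)^{K+1}$. Your write-up merely makes explicit the bookkeeping that the paper leaves implicit.
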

\begin{proof}
Fix $x_1,\ldots,x_{n-1}$ and $x_{n+K},y_1,\ldots,y_K$ all in $\mathcal{U}$. Using the above lemma, we immediately get that
\begin{align*}
&\mathbb{P}(\eta_{0,n+K}=x_{n+K},\eta_{1,n+K}=y_1,\ldots,\eta_{K,n+K}=y_K|\xi_1=x_1,\ldots,\xi_{n-1}=x_{n-1})\\
&=\frac{(1/p)^{n+K}}{(1/p)^{n-1}}\ =\ (1/p)^{K+1}.
\end{align*}
We conclude the proof by observing that the above conditional probability is independent of the choices of $x_1,\ldots,x_{n-1}$ and $x_{n+K},y_1,\ldots,y_K$.
\end{proof}

The functional central limit theorem given in Section \ref{clt} relies on a detailed analysis of the relationship between the various $\eta_{K,n}$'s. Being a product of powers of $\xi_1,\ldots,\xi_n$,
\begin{equation}
\eta_{K,n}=\bigotimes_{\ell=1}^n \xi_{n-\ell+1}^{\otimes\nu_{K,\ell}}
\end{equation}
we need to identify those that are multiples of $p$. As such, the corresponding $\xi_k$'s are ``switched off'' making them independent of $\eta_{K,n}$. The following results address these very issues.

To understand the structure of $\nu_{K,n}$, we make use of the following theorem (see \cite{Lucas}, p229). Recall that the base $p$ expansion of $n$ is $n=\alpha_kp^k+\ldots+\alpha_1p+\alpha_0$ where $\alpha_0,\dots,\alpha_k\in\{0,1,\ldots,p-1\}$ are the base $p$ digits of $n$.

\begin{theorem}\label{Lucas}[Lucas]
A binomial coefficient $\binom{n}{m}$ is divisible by a prime $p$ if and only if at least one of the base $p$ digits of $m$ is greater than the corresponding digit of $n$.
\end{theorem}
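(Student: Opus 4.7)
The plan is to prove Lucas's theorem via generating functions in $\mathbb{F}_p[x]$, after which the divisibility criterion in the statement drops out almost for free.

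First, I would establish the \emph{Freshman's dream} identity
$$(1+x)^p \equiv 1 + x^p \pmod{p}$$
in $\mathbb{F}_p[x]$. This rests on the elementary fact that $\binom{p}{k} \equiv 0 \pmod p$ for $1 \leq k \leq p-1$, which follows from $k!(p-k)!\binom{p}{k}=p!$ together with the primality of $p$. Raising both sides to the $p$-th power and re-applying the identity, induction then gives $(1+x)^{p^i} \equiv 1+x^{p^i} \pmod p$ for every $i \geq 0$.

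Next, writing the base-$p$ expansion $n = \sum_i n_i p^i$, I split
$$(1+x)^n = \prod_i \left((1+x)^{p^i}\right)^{n_i} \equiv \prod_i (1+x^{p^i})^{n_i} \pmod p.$$
Expanding each factor by the ordinary binomial theorem, the coefficient of $x^m$ on the right-hand side is a sum over ways of choosing, for each $i$, an exponent $m_i \in \{0,\ldots,n_i\}$ such that $\sum_i m_i p^i = m$. Because the base-$p$ representation of $m$ is unique, this sum collapses to a single term, yielding
$$\binom{n}{m} \equiv \prod_i \binom{n_i}{m_i} \pmod p,$$
with the convention $\binom{n_i}{m_i}=0$ whenever $m_i>n_i$.

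Finally, I would read off the stated criterion. Since $0 \leq n_i, m_i \leq p-1$, each factor $\binom{n_i}{m_i}$ is either $0$ (if $m_i > n_i$) or a positive integer strictly less than $p$, hence coprime to $p$. The product thus vanishes modulo $p$ if and only if some $m_i$ exceeds $n_i$, which is precisely the claim. The one subtle point — and the main place where care is needed — is the coefficient extraction: one must argue that only the choice $(m_0,m_1,\ldots)$ matching the digits of $m$ contributes, which is exactly where the uniqueness of the base-$p$ representation enters decisively. Everything else is routine manipulation in $\mathbb{F}_p[x]$.
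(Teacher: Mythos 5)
Your proposal is the standard generating-function proof of Lucas's theorem, and it is essentially correct; note, however, that the paper does not prove this statement at all --- it is a classical result quoted with a citation to Lucas (1878), so you are supplying a proof where the authors supply none. Your argument (Freshman's dream, factoring $(1+x)^n$ along the base-$p$ digits of $n$, and collapsing the coefficient of $x^m$ to a single term by uniqueness of the base-$p$ expansion) correctly yields the congruence $\binom{n}{m}\equiv\prod_i\binom{n_i}{m_i}\pmod p$, from which the stated divisibility criterion follows. One small slip in the last step: you claim each nonzero factor $\binom{n_i}{m_i}$ is ``a positive integer strictly less than $p$,'' which is false in general (e.g.\ $\binom{4}{2}=6>5$ for $p=5$). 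The correct justification for coprimality to $p$ is that $\binom{n_i}{m_i}$ divides $n_i!$ and $n_i\leq p-1<p$, so $n_i!$ --- and hence $\binom{n_i}{m_i}$ --- carries no factor of $p$. With that one-line repair the proof is complete.
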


The following proposition is essential to our analysis. Its proof can be found in the appendix.
\begin{proposition}\label{nu}
The following properties hold for $\nu_{K,n}$:
\begin{enumerate}
\item for $K=p^\ell$ and $1<n\leq p^\ell$, then $\nu_{K,n}=0$;
\item for $n=p^\ell$ and $1\leq K\leq p^\ell$, then $\nu_{K,n}=0$;
\item for $n=p^\ell+1$ and $1\leq K\leq p^\ell$, then $\nu_{K,n}=1 \mod p$;
\item for $1\leq K\leq p^\ell$, $\nu_{K,p^\ell-K+1}\neq 0 \mod p$.
\end{enumerate}
\end{proposition}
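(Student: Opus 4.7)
My plan is to use the explicit formula $\nu_{K,n} = \binom{n+K-2}{n-1} \mod p$ together with Lucas' theorem (Theorem~\ref{Lucas}), by reading off the base-$p$ expansions of the top and bottom of the binomial in each of the four cases. The common trick is to choose the binomial representation (using $\binom{a}{b}=\binom{a}{a-b}$ where convenient) whose bottom is either $p^{\ell}-1$ or $p^{\ell}$, since these have particularly clean base-$p$ digits.

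For parts (1) and (2), I would first rewrite the binomial as $\binom{p^{\ell}+s-1}{p^{\ell}-1}$, taking $s=n-1$ in case (1) (where $K=p^{\ell}$) and $s=K-1$ in case (2) (where $n=p^{\ell}$). The bottom $p^{\ell}-1$ has digit $p-1$ at each of positions $0,\ldots,\ell-1$ and $0$ at position $\ell$. Under the hypotheses, $0\leq s-1\leq p^{\ell}-2$, so the top $p^{\ell}+(s-1)$ has digit $1$ at position $\ell$ and the digits of $s-1$ at lower positions. By Lucas, the product of digit-binomials is non-zero modulo $p$ only if every digit of $s-1$ equals $p-1$, which would force $s=p^{\ell}$ and contradict $s\leq p^{\ell}-1$; the binomial therefore vanishes modulo $p$. (The borderline $s=0$, i.e., $K=1$ in (2), gives $\binom{p^{\ell}-1}{p^{\ell}-1}=1$; as in the $n>1$ restriction of (1), the vanishing claim in (2) is meant for $K>1$.)

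For part (3), I would evaluate $\nu_{K,p^{\ell}+1}=\binom{p^{\ell}+K-1}{p^{\ell}}$ directly. Under $1\leq K\leq p^{\ell}$, the top has digit $1$ at position $\ell$ and the digits of $K-1\in\{0,\ldots,p^{\ell}-1\}$ at positions $0,\ldots,\ell-1$, while $p^{\ell}$ has digit $1$ at position $\ell$ and $0$ elsewhere. Each Lucas factor is either $\binom{1}{1}=1$ or $\binom{(K-1)_{i}}{0}=1$, so the product is $1\mod p$. For part (4), the formula reads $\nu_{K,p^{\ell}-K+1}=\binom{p^{\ell}-1}{p^{\ell}-K}$, where $0\leq p^{\ell}-K\leq p^{\ell}-1$. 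Since every digit of $p^{\ell}-1$ is the maximal value $p-1$, each Lucas factor $\binom{p-1}{(p^{\ell}-K)_{i}}$ is automatically non-zero, so the binomial is non-zero modulo $p$.

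The argument is conceptually straightforward once the right form of the binomial is picked in each case, and I don't anticipate a serious obstacle; the only real work is the digit bookkeeping, and the only genuine subtlety is the boundary case $K=1$ in (2) noted above.
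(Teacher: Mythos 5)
Your proof is correct, and it follows the paper's basic strategy (Lucas' theorem applied to $\nu_{K,n}=\binom{n+K-2}{n-1}\bmod p$), but the execution differs in ways worth recording. For (1), (2) and (4) you flip the binomial so that the lower index becomes $p^\ell-1$ or $p^\ell$: since every base-$p$ digit of $p^\ell-1$ is the maximal value $p-1$, the digit comparison in Lucas' criterion becomes a one-line argument, whereas the paper computes the expansion of $n+K-2$ explicitly and tracks the first nonzero digit $\alpha_j$ (and, in (4), splits into the cases $\beta_0>0$ and $\beta_0=0$); your route buys brevity at no cost. For (3) the difference is more substantive: the paper deduces $\nu_{K,p^\ell+1}=1$ by telescoping the recursion $\nu_{K,p^\ell+1}=\nu_{K,p^\ell}+\nu_{K-1,p^\ell+1}$ against part (2), using only the divisibility criterion stated as Theorem \ref{Lucas}; your direct evaluation of $\binom{p^\ell+K-1}{p^\ell}$ needs the full multiplicative form of Lucas' theorem, $\binom{m}{n}\equiv\prod_i\binom{m_i}{n_i}\bmod p$, which is standard (and is what Lucas actually proved) but is strictly stronger than the statement the paper quotes, so you should say explicitly that you are invoking that form. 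Finally, your remark about the boundary case $K=1$ in (2) is a genuine catch: $\nu_{1,p^\ell}=\binom{p^\ell-1}{p^\ell-1}=1\neq0$, so (2) as stated fails at $K=1$; the paper's own proof (symmetry reducing (2) to (1), which assumes $n>1$) silently covers only $K\geq2$, and that is also all that is used downstream, since the telescoping in (3) only needs $\nu_{K,p^\ell}=0$ down to $K=2$.
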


\begin{corollary}
Let $\omega_K=\min\{n\geq2:\nu_{K,n}\neq0\mod p\}$. Then $\omega_0=+\infty$ and, if $p^\ell$ is the smallest power of $p$ greater than or equal to a positive integer $K$, then $\omega_K\leq p^\ell-K+1$.
\end{corollary}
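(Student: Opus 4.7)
The plan is to treat this as an almost immediate reformulation of Proposition~\ref{nu}; all of the real work (via Lucas's theorem) is packaged in the proof of that proposition, which is deferred to the appendix.

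First I would dispose of $\omega_0=+\infty$. From the defining recursion of the array one has $\nu_{0,1}=1$ and $\nu_{0,n}=0$ for every $n\geq 2$. Consequently the set $\{n\geq 2:\nu_{0,n}\neq 0\mod p\}$ is empty, and under the usual convention $\min\emptyset=+\infty$ this gives $\omega_0=+\infty$.

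Next, for the bound on $\omega_K$ with $K\geq 1$: let $p^\ell$ be the smallest power of $p$ with $p^\ell\geq K$, so that $1\leq K\leq p^\ell$. I would then apply part~(4) of Proposition~\ref{nu} directly, which asserts precisely that $\nu_{K,p^\ell-K+1}\not\equiv 0\mod p$. As soon as $p^\ell-K+1\geq 2$, this index is admissible in the definition of $\omega_K$ and the bound $\omega_K\leq p^\ell-K+1$ drops out. It is worth flagging that parts~(1) and~(2) of Proposition~\ref{nu} complement this by guaranteeing $\nu_{K,n}=0$ for a long stretch of smaller $n$, so the bound is in fact expected to be tight. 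The one slightly delicate case is the coincidence $K=p^\ell$, in which the nominal right-hand side $p^\ell-K+1$ collapses to $1$ and is no longer admissible; there one must instead replace $p^\ell$ by $p^{\ell+1}$ and reapply part~(4) to obtain $\omega_K\leq p^{\ell+1}-p^\ell+1$.

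There is essentially no obstacle in the present argument beyond keeping track of which clause of Proposition~\ref{nu} is being invoked and checking that the index produced is at least $2$. The only conceptual content, the congruence analysis of the binomials $\binom{n+K-2}{n-1}$ modulo the prime $p$ via Lucas's theorem, has already been performed in establishing Proposition~\ref{nu}.
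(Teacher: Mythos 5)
Your proof is correct and matches the paper's (implicit) argument exactly: the corollary is stated without proof as an immediate consequence of Proposition~\ref{nu}(4) together with the defining property $\nu_{0,n}=0$ for $n\geq2$. Your observation about the degenerate case $K=p^\ell$ (where the stated bound collapses to the inadmissible value $1$, e.g.\ $K=1$ or $K=p$) is a genuine and worthwhile catch that the paper glosses over, and your fix via $p^{\ell+1}$ is the right one.
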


The final ingredient in the model setup is to define the random walks themselves:
\begin{equation}
Y_{K,n}=\sum_{i=1}^n \eta_{K,i}, \ \ \mbox{ with }Y_{K,0}=0,
\end{equation}
where $\sum$ represents the usual sum in $\mathbb{R}$ and $K\in\mathbb{Z}$. We shall maintain the notation $X_n$ for $Y_{0,n}$.

From the equality in law of the sequences $(\eta_{K,n})_n$, we immediately deduce that,
for any given $K$, $(Y_{K,n})_n$ is a random walk identical in law to $(X_n)_n$.

As our aim is to prove a central limit theorem for the $(K+1)$-dimensional random walk\footnote{Here again we abuse notations by referring to this process as $W_n$.} $W_n=(Y_{0,n},\ldots,Y_{K,n})$, a necessary step of which is the removal of its drift, we can assume without loss of generality that
$$\mathbb{E}[\xi_n]=0.$$
When this is coupled with the requirement that $\xi_n$ must have a uniform distribution (to guarantee that
the distribution of $(X_n)_n$ is preserved after bootstrapping), we obtain the following condition on the values in $\mathcal{U}$:
\begin{equation}
u_0+u_1+\dots+u_{p-1}=0.
\end{equation}

The second moment of $\xi_n$ plays an important role. We denote it by $\sigma^2$:
\begin{equation}
\sigma^2 = \mathbb{E}[\xi_n^2] = \frac1p(u_0^2+u_1^2+\dots+u_{p-1}^2).
\end{equation}

\begin{proposition}\label{etacorr}
Let $K$ and $J$ be integers and, $m$ and $n$ be positive integers.
\begin{enumerate}
\item If $m\neq n$, then $\mathbb{E}[\eta_{K,m}\eta_{J,n}]=0$.
\item If $m=n$ and $K=J$, then $\mathbb{E}[\eta_{K,m}\eta_{J,n}]=\sigma^2$.
\item If $K\neq J$ and $m=n<\omega_{|K-J|}$, then $\mathbb{E}[\eta_{K,m}\eta_{J,n}]=\sigma^2$.
\item If $K\neq J$ and $m=n\geq\omega_{|K-J|}$, then $\mathbb{E}[\eta_{K,m}\eta_{J,n}]=0$.
\end{enumerate}
It follows that, for $m\leq n$,
\begin{enumerate}\setcounter{enumi}{4}
\item $\bds\mathbb{E}[Y_{K,m}Y_{J,n}] = \min(m,\omega_{|K-J|}-1)\sigma^2\eds$.
\end{enumerate}
\end{proposition}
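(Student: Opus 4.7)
The plan is to handle the five claims sequentially, relying on one recurring device: whenever a random variable $X$ taking values in $\mathcal{U}$ is conditionally uniform on $\mathcal{U}$ given a sub-$\sigma$-algebra $\mathcal{G}$, the centering condition $u_0+u_1+\ldots+u_{p-1}=0$ forces $\mathbb{E}[X\mid\mathcal{G}]=0$. The key algebraic tool is the identity $\eta_{K,n}=\bigotimes_{\ell=1}^n\eta_{J,n-\ell+1}^{\otimes\nu_{K-J,\ell}}$ (valid for $K\geq J$), inherited from the iteration property $\eta_{K,\bullet}=\Delta^{K-J}(\eta_{J,\bullet})$.

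For part (1), I assume without loss of generality that $m<n$. Using $\nu_{J,1}=1$, I split off the final factor and write $\eta_{J,n}=\xi_n\otimes A$ with $A\in\mathcal{F}_{n-1}$. Since $u\mapsto u\otimes A$ is a bijection of $\mathcal{U}$ and $\xi_n$ is uniform and independent of $\mathcal{F}_{n-1}$, $\eta_{J,n}$ is conditionally uniform on $\mathcal{U}$ given $\mathcal{F}_{n-1}$, so $\mathbb{E}[\eta_{J,n}\mid\mathcal{F}_{n-1}]=0$. The tower property, combined with $\eta_{K,m}\in\mathcal{F}_m\subset\mathcal{F}_{n-1}$, closes the argument. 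Part (2) is immediate: $\eta_{K,n}$ is uniform on $\mathcal{U}$, so $\mathbb{E}[\eta_{K,n}^2]=\sigma^2$.

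For parts (3) and (4), symmetry lets me take $K>J$ and set $r=K-J\geq1$. In part (3), $n<\omega_r$ means $\nu_{r,\ell}\equiv 0\mod p$ for every $2\leq\ell\leq n$, so by Lagrange's theorem every such factor collapses to $e$; together with $\nu_{r,1}=1$, this gives $\eta_{K,n}=\eta_{J,n}$ and therefore $\mathbb{E}[\eta_{K,n}\eta_{J,n}]=\sigma^2$. In part (4), I set $\ell^*=\omega_r$ (which satisfies $2\leq\ell^*\leq n$ by assumption) and $j^*=n-\ell^*+1\in\{1,\ldots,n-1\}$. Isolating the $\ell=\ell^*$ factor yields $\eta_{K,n}=\eta_{J,j^*}^{\otimes\nu_{r,\ell^*}}\otimes B$, where $B$ depends only on $\{\eta_{J,j}:j\neq j^*\}$. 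I condition on $\mathcal{G}=\sigma(\eta_{J,j}:1\leq j\leq n,\,j\neq j^*)$; this fixes $\eta_{J,n}$ (since $j^*<n$) and $B$, while leaving $\eta_{J,j^*}$ uniform on $\mathcal{U}$ because $(\eta_{J,n})_n$ is an i.i.d.\ uniform sequence. Since $\nu_{r,\ell^*}\neq 0\mod p$ and $p$ is prime, the map $u\mapsto u^{\otimes\nu_{r,\ell^*}}\otimes B$ is a bijection of $\mathcal{U}$, so $\eta_{K,n}$ is conditionally uniform given $\mathcal{G}$ and the tower property gives $\mathbb{E}[\eta_{K,n}\eta_{J,n}]=0$. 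Part (5) then follows by bilinearity: only the diagonal terms $i=j$ in the double sum $\sum_{i,j}\mathbb{E}[\eta_{K,i}\eta_{J,j}]$ survive, and each contributes $\sigma^2$ precisely when $i<\omega_{|K-J|}$ (with the convention $\omega_0=+\infty$).

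The main technical hurdle is part (4): one must locate a sigma-algebra rich enough to measure $\eta_{J,n}$ but leaving sufficient residual randomness to render $\eta_{K,n}$ uniform. The existence of a suitable $j^*\neq n$ is guaranteed by $\omega_r\geq2$ (built into the definition of $\omega_r$), and the bijectivity of $u\mapsto u^{\otimes\nu_{r,\ell^*}}$ relies on the primality of $|\mathcal{U}|=p$.
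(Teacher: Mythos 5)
Your proof is correct and follows essentially the same route as the paper: both arguments rest on the representation of $\eta_{K,n}$ as an $\otimes$-product of powers of lower-level increments, the observation that for $n\geq\omega_{|K-J|}$ some exponent $\nu_{|K-J|,\ell}$ with $\ell\geq2$ is nonzero mod $p$ so that the corresponding independent uniform factor makes the product conditionally uniform, and the centering condition $u_0+\cdots+u_{p-1}=0$ to kill the conditional mean. The only cosmetic difference is that the paper first reduces to $J=0$ via the equality in law $(\eta_{K,m},\eta_{J,n})\stackrel{d}{=}(\eta_{K-J,m},\xi_n)$ and then conditions on $\xi_m$, whereas you work at general $J$ and condition on $\sigma(\eta_{J,j}:j\neq j^*)$; the underlying mechanism is identical.
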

\begin{proof}
The first two statements follow from the identity in law:
$$(\eta_{K,m},\eta_{J,n}) = ((\Delta^{K-J}(\eta_{J,\bullet}))_m,\eta_{J,n})
\stackrel{d}{=} ((\Delta^{K-J}(\xi_\bullet))_m,\xi_n) = (\eta_{K-J,m},\xi_n),$$
where $\eta_{J,\bullet}=(\eta_{J,n})_n$ and $\xi_\bullet=(\xi_n)_n$. To show the next two statements we proceed as follows.
Suppose $K>J$. We write
\begin{eqnarray*}
\mathbb{E}[\eta_{K,m}\eta_{J,m}]\ =\ \mathbb{E}[\eta_{K-J,m}\xi_m]
& = & \mathbb{E}\left[\xi_m\left(\xi_m\otimes\bigotimes_{\ell=2}^m\xi_{m-\ell+1}^{\otimes\nu_{K-J,\ell}}\right)\right]\\
& = & \mathbb{E}\left[\xi_m\mathbb{E}\left[\left.\left(\xi_m\otimes\bigotimes_{\ell=2}^m\xi_{m-\ell+1}^{\otimes\nu_{K-J,\ell}}\right)\right|\xi_m\right]\right]
\end{eqnarray*}
then we observe that, for $m\geq\omega_{K-J}$ (so that at least one $\nu_{K-J,\ell}\neq0\mod p$) and any $u\in\mathcal{U}$, $u\otimes\bigotimes_{\ell=2}^m\xi_{m-\ell+1}^{\otimes\nu_{K-J,\ell}}$ is uniformly distributed over $\mathcal{U}$. Of course, in the case $m<\omega_{K-J}$, $\mathbb{E}[\eta_{K,m}\eta_{J,m}] = \mathbb{E}[\xi_m^2] = \sigma^2$.

Suppose, for the last statement, that $m\leq n$, then in view of (1),
$$\mathbb{E}[Y_{K,m}Y_{J,n}] = \sum_{i=1}^m\sum_{j=1}^n\mathbb{E}[\eta_{K,i}\eta_{J,j}] =
\sum_{i=1}^m\mathbb{E}[\eta_{K,i}\eta_{J,i}].$$
The result follows by application of (2), (3) and (4).
\end{proof}

We end this section with 2 remarks.

\begin{remark}
When $p$ is not prime, one can alter the distribution of $\xi$ by adding sufficiently many zeroes to make the number of possible values prime. On the one hand, these zeroes will be seen differently by the operation $\otimes$ yielding a cyclic group, on the other hand, their only impact on the the bootstrapped random walks is to slow down their evolutions. The resulting random walk is simply a ``lazy'' version of the original one.
\end{remark}

\begin{remark}
In the case $p=2$ and when one focuses on the array $\eta_{K,n}$ (i.e. the increments and not the random walks themselves), then the setup appears as an ``infinite'' memory cellular automaton, which can be reduced to a regular one (CA60) by performing a ``sliding'' of the columns (see Figure \ref{2DIteratesCA60}).
\begin{figure}[h]
\begin{center}
\includegraphics[height=2in]{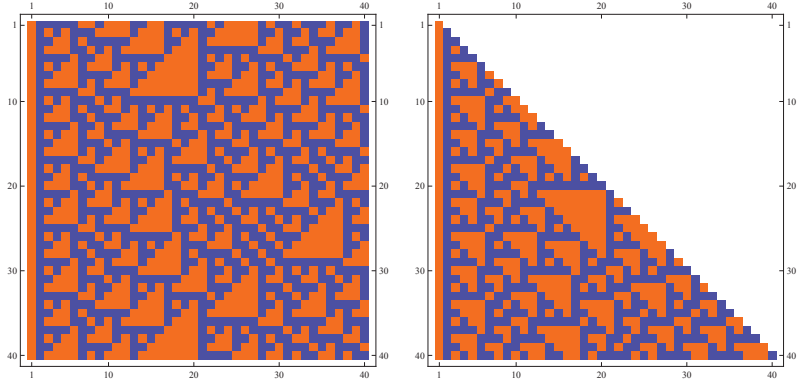}
\end{center}
\caption{The array $\eta_{K,\ell}$ (in its original form on the left) becomes after ``sliding'' (right) a cellular automaton 60 ($p=2$).}\label{2DIteratesCA60}
\end{figure}
The sole reason for this observation is for completeness as our focus is on the random walks and this connection has no bearing on our results or thinking.
\end{remark}

\section{A functional central limit theorem}\label{clt}

It is well known that $\bds\mathfrak{X}_n(t) = \frac1{\sigma\sqrt{n}}X_{\lfloor nt\rfloor}\eds$ and more generally $\bds\mathfrak{Y}_{K,n}(t) = \frac1{\sigma\sqrt{n}}Y_{K,\lfloor nt\rfloor}\eds$ converge weakly to a Brownian motion ($t\in[0,1]$). The focus of this section is the $(K+1)$-dimensional process
$\mathfrak{W}_n(t)=(\mathfrak{Y}_{0,n}(t),\ldots,\mathfrak{Y}_{K,n}(t))$, $t\in[0,1]$.

\begin{theorem}\label{mength}
For any positive integer $K$, $\mathfrak{W}_n$ converges weakly to a $(K+1)$-dimensional Brownian motion (with independent components).
\end{theorem}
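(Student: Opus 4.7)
The strategy is a standard functional central limit argument: prove tightness of $\{\mathfrak{W}_n\}$ in the Skorokhod space $D([0,1],\mathbb{R}^{K+1})$, and prove convergence of finite-dimensional distributions. Tightness is the easier ingredient: by Donsker's classical invariance principle applied coordinatewise (recall $(Y_{J,n})_n \stackrel{d}{=} (X_n)_n$ is a sum of bounded i.i.d.\ variables), each family $\{\mathfrak{Y}_{J,n}\}_n$ is tight in $D[0,1]$; since the putative limit has continuous paths, joint tightness in the multidimensional Skorokhod space follows from tightness of the marginals.

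The real work lies in the finite-dimensional convergence. The first step is to extract from Proposition \ref{kappadep} the essential dependence structure: the vector-valued sequence $V_j := (\eta_{0,j},\eta_{1,j},\ldots,\eta_{K,j})$, $j\geq 1$, is $K$-dependent. Indeed, for $j' \geq j + K + 1$, Proposition \ref{kappadep} makes $V_{j'}$ independent of $\mathcal{F}_{j'-K-1}$, and hence of $V_j$, which is $\mathcal{F}_j$-measurable. By the Cram\'er--Wold device, it therefore suffices to establish a one-dimensional CLT for arbitrary linear combinations
$$S_n = \sum_{j=1}^r \sum_{J=0}^K c_{j,J}\bigl(\mathfrak{Y}_{J,n}(s_j) - \mathfrak{Y}_{J,n}(s_{j-1})\bigr) = \frac{1}{\sigma\sqrt{n}} \sum_{i=1}^{\lfloor n s_r\rfloor} Z_{i,n},$$
where $Z_{i,n}$ is the linear combination of the components of $V_i$ whose weights depend only on the block $(s_{j-1},s_j]$ containing $i/n$. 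Since each $\eta_{J,i}$ takes values in the finite set $\mathcal{U}$, the $Z_{i,n}$ are uniformly bounded and mean-zero, and the sequence $(Z_{i,n})_i$ inherits $K$-dependence from $(V_i)_i$; this sets up a CLT for $m$-dependent triangular arrays (Hoeffding--Robbins, Berk).

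The variance calculation is where Proposition \ref{etacorr} produces the independence of the limiting components. By Proposition \ref{etacorr}(1), cross terms with $i\neq i'$ vanish, so
$$\mathrm{Var}(S_n) = \frac{1}{\sigma^2 n} \sum_{i=1}^{\lfloor n s_r\rfloor} \sum_{J,J'} c_{j(i),J}\,c_{j(i),J'}\,\mathbb{E}[\eta_{J,i}\eta_{J',i}],$$
where $j(i)$ denotes the index of the block containing $i$. Parts (2)--(4) of Proposition \ref{etacorr}, combined with the finiteness of $\omega_{|J-J'|}$ supplied by the corollary to Proposition \ref{nu}, show that for all but a bounded initial set of indices $i$ the off-diagonal terms ($J \neq J'$) contribute zero, and only the diagonal terms $J=J'$ survive, contributing $\sigma^2 \sum_J c_{j(i),J}^2$ apiece. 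Summing over $j$ gives
$$\mathrm{Var}(S_n) \;\longrightarrow\; \sum_{j=1}^r (s_j - s_{j-1}) \sum_{J=0}^K c_{j,J}^2,$$
which is precisely the variance of the corresponding linear combination of increments of $K+1$ independent standard Brownian motions. Invoking the $K$-dependent CLT then delivers the Gaussian limit with this variance, and Cram\'er--Wold identifies the finite-dimensional distributions as those of standard Brownian motion in $\mathbb{R}^{K+1}$.

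The main subtlety, in my view, is not the CLT itself but the reduction step that precedes it: one must use the uniformity statement of Proposition \ref{kappadep} to convert the long-range \emph{functional} dependence (each $\eta_{J,i}$ formally depends on $\xi_1,\ldots,\xi_i$) into genuine $K$-step \emph{probabilistic} independence, and the uniform bound on $\omega_{|J-J'|}$ coming from the corollary to Proposition \ref{nu} to guarantee that the off-diagonal contributions in the variance really do vanish asymptotically. With these two ingredients in hand, standard tightness and the Cram\'er--Wold device combine to yield the theorem.
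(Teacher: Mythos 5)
Your proposal is correct in outline but follows a genuinely different route from the paper. Both arguments start from the Cram\'er--Wold device and both exploit Propositions \ref{kappadep} and \ref{etacorr} in the same way: Proposition \ref{etacorr} kills the cross-covariances (the finitely many exceptional indices $i<\omega_{|J-J'|}$ contributing $O(1)$ before normalisation), and Proposition \ref{kappadep} converts the functional dependence of the $\eta_{J,i}$ on the whole past into finite-range probabilistic independence. Where you diverge is in the limit theorem invoked. You split the problem into tightness (coordinatewise Donsker, then joint $C$-tightness from marginal tightness because the limit is continuous) and finite-dimensional convergence via a CLT for $K$-dependent triangular arrays. The paper instead observes that $S_n=\sum_{\ell}R_\ell$ with $R_\ell=\sum_k a_k\eta_{k,\ell}$ is an $(\mathcal{F}_n)$-martingale with bounded increments and applies Scott's martingale functional CLT, which delivers tightness and the Gaussian limit in one stroke once $\frac{1}{s_n^2}\sum_{\ell\le n}R_\ell^2\to1$ is verified; that verification uses precisely the independence along the arithmetic subsequences $(R_{k+\ell(K+1)})_\ell$ that you also identify, fed into a strong law of large numbers. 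Your route is more classical and makes the covariance structure of the limit more explicit; the paper's route dispenses with a separate tightness argument and with the $m$-dependent CLT machinery. One step in your write-up needs tightening: the $m$-dependent CLT requires genuine block independence of $(V_1,\dots,V_j)$ from $(V_{j'})_{j'\ge j+K+1}$, whereas the justification you give from Proposition \ref{kappadep} only yields pairwise independence of $V_j$ and $V_{j'}$. The stronger statement does hold --- the entire future block is a function of the single column $V_{j+K+1}$ and of $(\xi_m)_{m\ge j+K+2}$ via the recursion $\eta_{k,m}=\eta_{k,m-1}\otimes\eta_{k-1,m}$, and this pair is jointly independent of $\mathcal{F}_j$ --- but it must be stated and proved, not inferred from the pairwise claim.
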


\begin{proof} Using the Cram\'er-Wold device (see for example Billingsley \cite{Billingsley})
%need more details or quote p255 of An Introduction to Measure-theoretic Probability by George G. Roussas
we reduce this multi-dimensional problem to a one-dimensional one. To this end we fix a normalised vector $(a_0,\ldots,a_K)\in\mathbb{R}^{K+1}$ ($\sum_{k=0}^K a_k^2=1$) and focus on the sequence of processes
$$\mathfrak{S}_n(t)=\sum_{k=0}^K a_k\mathfrak{Y}_{k,n}(t) = \frac1{\sigma\sqrt{n}}\sum_{k=0}^K a_kY_{K,\lfloor nt\rfloor}
= \frac1{\sigma\sqrt{n}}\sum_{\ell=1}^{\lfloor nt\rfloor}\sum_{k=0}^K a_k\eta_{k,\ell} = \frac1{\sigma\sqrt{n}}S_{\lfloor nt\rfloor},$$
where
$$S_n = \sum_{\ell=1}^nR_\ell\mbox{ and }R_\ell = \sum_{k=0}^K a_k\eta_{k,\ell}.$$
By Proposition \ref{etacorr}, the random variables $R_n$ are clearly uncorrelated
$$\mathbb{E}[R_mR_n] = \sum_{k,\ell=0}^K a_ka_\ell\mathbb{E}[\eta_{k,m}\eta_{\ell,n}] = 0$$
and we have
$$\mathbb{E}[R_n^2] = \sum_{k,\ell=0}^K a_ka_\ell\mathbb{E}[\eta_{k,n}\eta_{\ell,n}]
= \sigma^2\sum_{k=0}^K a_k^2 + 2\sigma^2\sum_{k=1}^K\sum_{\ell=0}^{k-1}a_ka_\ell 1_{n<\omega_{k-\ell}},$$
which, for $n\geq\max_{1\leq k\leq K}\omega_k$, reduces to
$$\mathbb{E}[R_n^2] = \sigma^2.$$
We deduce that $S_n$ is a martingale with respect to the filtration $\mathcal{F}_n$ and
\begin{equation}\label{sn2}
s_n^2 = \mathbb{E}[S_n^2] = \sum_{\ell=1}^n\mathbb{E}[R_\ell^2] = n\sigma^2+C,
\end{equation}
where $C$ is a constant that only depends on $K$ and $a_0,\ldots,a_K$ and not on $n$.

Next we introduce an intermediary process $\mathfrak{M}_n$ and use a result of Scott \cite{Scott} to show that it approaches a Brownian motion (weakly). For $t\in[0,1]$,
we set $\mathfrak{M}_n(t) = S_k/s_n$ whenever $k$ is such that $s_k^2\leq ts_n^2<s_{k+1}^2$. Then, since $S_n$ has bounded increments, to establish the weak convergence of $\mathfrak{M}_n$ to a standard Brownian motion, it is sufficient to show that $\bds\frac{1}{s_n^2}\sum_{\ell=1}^nR_\ell^2\stackrel{p}{\longrightarrow}1\eds$ (see \cite{Scott}).

In fact we shall prove a stronger result in which the convergence is almost sure. By Proposition \ref{kappadep} we know that, for any $k\in\{1,\ldots,K\}$, $(R_{k+nK})_n$ are independent random variables. If we now let $V_{k,n} = R_{k+n(K+1)}^2-\mathbb{E}[R_{k+n(K+1)}^2]$ and observe that, since the $R_n$'s are bounded random variables, so are the $V_{k,n}$'s. It follows that $\bds\sum_n\frac{\mathbb{E}[V_{k,n}^2]}{n^2}<+\infty\eds$ from which we deduce (see for example \cite{Williams} p118) that
$\bds\frac1n\sum_{\ell=0}^{n-1}(R_{k+\ell(K+1)}^2-\mathbb{E}[R_{k+\ell(K+1)}^2])\stackrel{a.s.}{\longrightarrow}0\eds$ and consequently that
$$\frac{1}{s_n^2}\sum_{\ell=0}^{n-1}R_{k+\ell(K+1)}^2\stackrel{a.s.}{\longrightarrow}1.$$
Here we have used the facts that $s_n^2\sim\sigma^2n$ and $\mathbb{E}[R_n^2]=\sigma^2$, for $n$ large enough.

Summing over $k\in\{1,\ldots,K+1\}$ yields
$$\frac{1}{s_{n(K+1)}^2}\sum_{\ell=1}^{n(K+1)}R_\ell^2 = \frac{(K+1)s_n^2}{s_{n(K+1)}^2}\ \frac1K\sum_{k=1}^K\frac{1}{s_n^2}\sum_{\ell=0}^{n-1}R_{k+\ell(K+1)}^2\stackrel{a.s.}{\longrightarrow}1.$$

It now follows that $\mathfrak{M}_n$ converges weakly to a Brownian motion and so does $(s_n/(\sigma\sqrt{n}))\mathfrak{M}_n$.

The final step is to establish that the processes $\mathfrak{S}_n$ and $(s_n/(\sigma\sqrt{n}))\mathfrak{M}_n$ are asymptotically equivalent. More specifically, we show that
\begin{equation}\label{uco}
 \lim_{n\to\infty}\sup_{t\in[0,1]}\left|\mathfrak{S}_n(t)-\frac{s_n}{\sigma\sqrt{n}}\mathfrak{M}_n(t)\right| = 0.
\end{equation}
Fix $t \in [0,1]$. Using \eqref{sn2} we get that, if $s_k^2\leq ts_n^2<s_{k+1}^2$, then $nt-C_1-1<k\leq nt-C_1$, for some constant $C_1$. Using the boundedness of the increments of $S_n$, we deduce that, for some positive constant $C_2$,
$$S_{\lfloor nt\rfloor}-C_2\leq S_k\leq S_{\lfloor nt\rfloor}+C_2.$$
Finally,
$$\left|\mathfrak{S}_n(t)-\frac{s_n}{\sigma\sqrt{n}}\mathfrak{M}_n(t)\right| = \frac{s_n}{\sigma\sqrt{n}}\left|\frac{S_{\lfloor nt\rfloor}}{s_n}-\frac{S_k}{s_n}\right| \leq \frac{C_2}{\sigma\sqrt{n}}$$
from which we deduce that $\mathfrak{S}_n$ converges weakly to a standard Brownian motion and therefore that $\mathfrak{W}_n$ converges weakly to a $(K+1)$-dimensional Brownian motion.
\end{proof}

\section{Combinatorial proofs}\label{proofs}

\subsection*{Proof of Theorem \ref{2dprob}}

The event $\{X_n=k\}$ is characterised by the number of $(-1)$'s amongst $\{\xi_1,\dots,\xi_n\}$ being equal to $\frac{n-k}{2}$. Call this quantity $n_k$. To further require that $Y_n=l$, the $(+1)$'s and the $(-1)$'s must be arranged in a specific order we describe in the next few lines.

The approach is to think of the $(-1)$'s as defining bins in which the $(+1)$'s must be placed in an appropriate way. Each bin will have a number (possibly zero) of $(+1)$'s followed by one $(-1)$. At the end of this line of bins, we allow one further bin that may only contain $(+1)$'s (or may be empty) -- see Figure \ref{BinsModel}.

\begin{figure}[h]
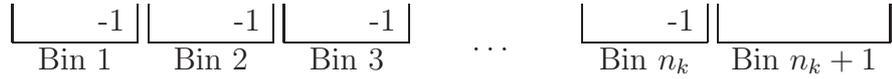

\begin{center}
\begin{tabular}{|ll|}
\hspace*{0.6cm} & -1\\\hline
\multicolumn{2}{c}{Bin 1}
\end{tabular}
\begin{tabular}{|ll|}
\hspace*{0.6cm} & -1\\\hline
\multicolumn{2}{c}{Bin 2}
\end{tabular}
\begin{tabular}{|ll|}
\hspace*{0.6cm} & -1\\\hline
\multicolumn{2}{c}{Bin 3}
\end{tabular}$\qquad\ldots\qquad$
\begin{tabular}{|ll|}
\hspace*{0.6cm} & -1\\\hline
\multicolumn{2}{c}{Bin $n_k$}
\end{tabular}
\begin{tabular}{|ll|}
\hspace*{0.6cm} & \\\hline
\multicolumn{2}{c}{Bin $n_k+1$}
\end{tabular}
\end{center}
\caption{The bins representation}\label{BinsModel}
\end{figure}

To decide on the value of $\eta_{m-1}$, all we need is to identify the bin in which $\xi_m$ falls and more precisely its evenness. Indeed, if $\xi_m$ falls in an even bin, then $\eta_{m-1}=-1$, while if it falls in an odd bin, then $\eta_{m-1}=+1$. The value of $\eta_m$ is simply $\eta_{m-1}\xi_m$.

Let us now denote by $\alpha_i$ the number of $(+1)$ in Bin $2i-1$ (odd bin) and by $\beta_i$ the number of $(+1)$ in Bin $2i$ (even bin). Consider first the case $n_k$ even. Then

$$Y_n = \sum_{i=1}^{n_k/2}(\alpha_i-1)+\alpha_{n_k/2+1}+\sum_{i=1}^{n_k/2}(1-\beta_i) = \sum_{i=1}^{n_k/2+1}\alpha_i-\sum_{i=1}^{n_k/2}\beta_i=\alpha-\beta,$$
where $\alpha$ is the total number of $(+1)$'s in odd bins and $\beta$ the total number of $(+1)$'s in even bins. The requirement that $Y_n=l$ now reduces to the restriction that $\alpha-\beta=l$. Since the total number of $(+1)$'s (in all bins) is $n-n_k$, we deduce that
$$\alpha=\frac{n-n_k+l}2\mbox{ and }\beta=\frac{n-n_k-l}2.$$
In summary, $X_n=k$ and $Y_n=l$ if and only if, amongst $\{\xi_1,\dots,\xi_n\}$, there are $\frac{n+k}2$ $(+1)$'s with $\frac{n-n_k+l}2$ placed in odd bins and $\frac{n-n_k-l}2$ placed in even bins. Therefore, the number of sequences that lead to $X_n=k$ and $Y_n=l$ equals the number of ways of placing $\frac{n-n_k+l}2$ balls into $\frac{n_k}2+1$ (odd) bins and $\frac{n-n_k-l}2$ balls into $\frac{n_k}2$ (even) bins:
$$\mathbb{P}(X_n=k,Y_n=l) = \binom{\frac{n+l}{2}}{\frac{n+k+2l}{4}}\binom{\frac{n-l-2}{2}}{\frac{n+k-2l}{4}}\left(\frac{1}{2}\right)^n.$$
The case $n_k$ odd is dealt with in an identical way.

\subsection*{Proof of Theorem \ref{3dprob}}

\underline{(1) Probability of return to the origin.}

Recall that $W_n = (Y_{0,n},Y_{1,n},Y_{2,n}) \stackrel{d}{=} (Y_{-1,n},Y_{0,n},Y_{1,n})$. Our first task will be to obtain the probability of return to the origin of the process $(Y_{-1,n},Y_{0,n},Y_{1,n})$ in $4n$ steps.

We know from Theorem \ref{2dprob} that $(Y_{0,4n},Y_{1,4n})$ returns to the origin if and only if there are exactly $2n$ $(-1)$'s and $2n$ $(+1)$'s equally split between odd and even bins. To add the constraint that $Y_{-1,4n}$ equals zero, we introduce the concept of a sign change. We shall say that index $i$ ($i\geq1$) represents a sign change if $\xi_i\xi_{i-1}=-1$ ($\xi_0=1$).

Now the event $\{Y_{-1,4n}=0\}$ coincides with the event
$$\left\{\sum_{i=1}^{4n}1_{\mbox{$i$ is a sign change}} = 2n \right\}.$$

For the first bin, no matter whether or not it is empty, there is one sign change. This is because the first $-1$ in the sequence produces a sign change as we suppose that $\xi_0 =1$.
From the second bin to the $(2n)$th bin, each non-empty bin translates into exactly $2$ sign changes. The last bin produces one sign change if it is non-empty and no sign change otherwise.
We see that, in order for $Y_{-1,4n}$ to equal 0, the last bin must be non-empty (i.e. $\xi_{4n}= 1$) and exactly $n-1$ out of bins 2 to $2n$ must be non-empty.

Next we set $\xi_{4n+1}=-1$ and place all digits on a circle thus forming $2n+1$ bins. The extra bin is the one that ends with $\xi_{4n+1}$ (which could also be thought of as $\xi_{-1}$) and is non-empty as $\xi_{4n}=1$.

\begin{figure}[h]
\begin{center}
\begin{tabular}{ll|}
$\cdots$ & 1\ \rd{-1}\\\hline
\multicolumn{2}{c}{$2n+1$}
\end{tabular}
\begin{tabular}{|ll|}
\rd{1} \hspace*{0.4cm} & -1\\\hline
\multicolumn{2}{c}{1}
\end{tabular}
\begin{tabular}{|ll|}
\hspace*{0.4cm} & -1\\\hline
\multicolumn{2}{c}{2}
\end{tabular}
\begin{tabular}{|ll|}
\hspace*{0.4cm} & -1\\\hline
\multicolumn{2}{c}{3}
\end{tabular}$\qquad\ldots\qquad$
\begin{tabular}{|ll|}
\hspace*{0.4cm} & -1\\\hline
\multicolumn{2}{c}{$2n$}
\end{tabular}
\begin{tabular}{|ll}
& $\cdots$\\\hline
\multicolumn{2}{c}{$2n+1$}
\end{tabular}
\end{center}
\caption{The bins representation on a circle}\label{BinsModelCircle}
\end{figure}

With the additional digits, $\xi_0=1$ and $\xi_{4n+1}=-1$, represented in red in Figure \ref{BinsModelCircle}, two additional sign changes are added to the original $2n$.

The scheme now reduces to placing exactly $2n+1$ balls into the bins with the following constraints:
\begin{itemize}
\item bins 1 and $2n+1$ are non-empty (they each have at least one $(+1)$);
\item of the remaining $2n-1$ $(+1)$'s, $n-1$ are placed in odd bins and $n$ in even bins;
\item the number of non-empty bins equals exactly $n+1$ (any non-empty bin translates into two sign changes).
\end{itemize}
The number of non-empty odd bins can be anything from 0 (all $n-1$ $(+1)$'s are in bins 1 and $2n+1$) to a maximum of $n-2$. In fact, it is not possible to have $n-1$ non-empty odd bins as that would imply that all even bins are empty.

Let us now consider the case of $k$ non-empty odd bins.
These must be selected out of $n-1$ odd bins.
The remaining $n-1-k$ non-empty even bins must be selected out of $n$ even bins.
Having selected the non-empty bins, we next count the number of ways to place $n+1$ $(+1)$'s into the $k+2$ odd bins and $n$ $(+1)$'s into the $n-1-k$ even bins (in such a way that all bins are non-empty).
Taking into account these combinatorial observations and summing over the number of non-empty odd bins, we get that the probability of the event of interest equals
\begin{equation}\label{primo}
\sum_{k=0}^{n-2} \binom{n-1}{k} \binom{n}{k+1}\binom{n}{k+1}\binom{n-1}{k+1} 2^{-4n}.
\end{equation}
The case of return to the origin after $4n+2$ steps is obtained in an identical way.

\underline{(2) Order of $\mathbb{P}(W_{2n}=0)$.}

Fix $\alpha \in (1/2,1)$ and let $a_n=\floor{n/2-n^\alpha}$, $b_n=\floor{n/2+n^\alpha}$ and
$$c_{n,k} = \Big(\frac{n-k}{n}\Big)\frac {(n-k)(n-k-1)}{n(k+1)}\Big(\frac{n-k}{k+1}\Big)^2\binom{n}{k}^4$$
so that
$$2^{4n}\mathbb{P}(W_{4n}=0) = \sum_{k\leq a_n}c_{n,k} + \sum_{k\in(a_n, b_n)}c_{n,k} + \sum_{k\geq b_n}c_{n,k} = \Gamma_1 + \Gamma_2 + \Gamma_3.$$
In the sequel, we obtain bounds for each of these three terms. We use $C_i$ to denote various positive constants.

Using Stirling's approximation and the fact that the binomial coefficients are increasing for $k<(n-1)/2$, we get that
\begin{eqnarray*}
\Gamma_1 & \leq & n^3\sum_{k\leq a_n}{n\choose k}^4\ \leq\ n^4{n\choose a_n}^4\ \leq\ C_1n^4\left(\frac{n^n}{(n-a_n)^{n-a_n}a_n^{a_n}}\right)^4\\
& \leq & C_2n^8\left(\frac{n^n}{(n/2-n^\alpha)^{n/2-n^\alpha}(n/2+n^\alpha)^{n/2+n^\alpha}}\right)^4\\
& = & C_2n^8\Big(\frac{n^n(n/2-n^\alpha)^{n^\alpha}}{(n^2/4-n^{2\alpha})^{n/2}(n/2+n^\alpha)^{n^\alpha}}\Big)^4 \\
& = & C_2n^82^{4n}\Big(\frac1{1-4 n^{2\alpha-2}}\Big)^{2n}\Big(\frac{n/2-n^\alpha}{n/2+n^\alpha}\Big)^{4n^\alpha} \\
& = & C_2n^82^{4n}\Big(1+\frac{4n^{2\alpha-2}}{1-4n^{2\alpha-2}}\Big)^{2n}\Big(1-\frac{2n^{\alpha-1}}{1/2+n^{\alpha-1}}\Big)^{4n^\alpha}\\
& \leq & C_3n^82^{4n}\exp\Big(\frac{8n^{2\alpha-1}}{1-4n^{2\alpha-2}}-\frac{16n^{2\alpha-1}}{1+2n^{\alpha-1}}\Big)\\
& \leq & C_3n^82^{4n}\exp\Big(-C_4n^{2\alpha-1}\Big)
\end{eqnarray*}

In the same way, we have
$$\Gamma_3 \leq C_5n^82^{4n}\exp\Big(-C_6n^{2\alpha-1}\Big).$$
Finally we have
$$\Gamma_2 \leq C_7n^\alpha{n\choose n/2}^4 \leq C_82^{4n}n^{\alpha-2}.$$

\underline{(3) Transience.}
As  $n^{\alpha-2} $ is summable ($\alpha-2<-1$), $\sum_n\mathbb{P}(W_{2n}=0)<+\infty$ and $(W_n)_n$ is transient.

\subsection*{Proof of Proposition \ref{nu}}

\underline{(1) For $K=p^\ell$ and $1<n\leq p^\ell$, then $\nu_{K,n}=0$.}

Let $n-1=\alpha_{\ell-1}p^{\ell-1}+\ldots+\alpha_1p+\alpha_0$, where $0\leq \alpha_i<p \mbox{ for } 0\leq i\leq \ell-1$, be the base $p$ expansion of $n-1$. Then the base $p$ expansion of $n+K-2$ is
$$n+K-2=p^\ell+\alpha_{\ell-1}p^{\ell-1}+\ldots+(\alpha_j-1)p^j+(p-1)\sum_{i=0}^{j-1}p^i$$
where $j$ is the first index such that $\alpha_j\neq0$ (i.e. $\alpha_0=\ldots=\alpha_{j-1}=0$ and $\alpha_j\geq 1$).

Since digit $j$ in the base $p$ expansion of $n-1$ (i.e. $\alpha_j$) is greater than digit $j$ in the base $p$ expansion of $n+K-2$ (i.e. $\alpha_j-1$),
we obtain the desired result by application of Lucas Theorem \ref{Lucas}.

\underline{(2) For $n=p^\ell$ and $1\leq K\leq p^\ell$, then $\nu_{K,n}=0$.}

This is an immediate consequence of the symmetry of the array $\nu_{K,n}$.

\underline{(3) For $n=p^\ell+1$ and $1\leq K\leq p^\ell$, then $\nu_{K,n}=1 \mod p$.}

Using (2) we can write
\begin{align*}
\nu_{K,p^{\ell}+1}&=\nu_{K,p^{\ell}}+\nu_{K-1,p^{\ell}+1}=\nu_{K-1,p^{\ell}+1} \mod p \\
&=\nu_{K-1,p^{\ell}}+\nu_{K-2,p^{\ell}+1}=\nu_{K-2,p^{\ell}+1} \mod p \\
&=\dots=\nu_{2,p^{\ell}}+\nu_{1,p^{\ell}+1}=\nu_{1,p^{\ell}+1}=1 \mod p.
\end{align*}

\underline{(4) For $1\leq K\leq p^\ell$, $\nu_{K,p^\ell-K+1}\neq 0 \mod p$.}

When $K=p^{\ell}$, $\nu_{K,p^\ell-K+1}=\nu_{K,1}=1\neq 0$. Let us assume that $1\leq K<p^\ell$. We write $n=p^\ell-K+1$ and $K=\beta_{\ell-1}p^{\ell-1}+\ldots+\beta_1p+\beta_0$ for the base $p$ expansion of $K$. Then the base $p$ expansion of $n-1$ is
\[n-1=p^\ell-K=
\begin{cases}(p-1-\beta_{\ell-1})p^{\ell-1}+\ldots+(p-1-\beta_1)p+(p-\beta_0) & 0<\beta_0<p\\
(p-1-\beta_{\ell-1})p^{\ell-1}+\ldots+(p-1-\beta_j)p^j &\beta_0=0
\end{cases}\]
where $j$ is the first index such that $\beta_j\neq 0$, i.e. $\beta_0=\dots=\beta_{j-1}=0, \beta_j\neq0. $

On the other hand, the base $p$ expansion of $n+K-2$ is:
$$n+K-2=p^\ell-1=(p-1)p^{\ell-1}+\ldots+(p-1)p+(p-1).$$
Again, by application of Lucas Theorem \ref{Lucas}, $\nu_{K,n}$ is not divisible by $p$.

\end{document}